\newtheorem{thm}{Theorem}[section]
\newtheorem{lem}[thm]{Lemma}
\newtheorem{prop}[thm]{Proposition}
\newtheorem{cor}[thm]{Corollary}
\theoremstyle{definition}
\newtheorem{defn}[thm]{Definition}
\newtheorem{exmp}[thm]{Example}
\theoremstyle{remark}
\newtheorem{rem}[thm]{Remark}
\newcommand{\bi}{\leftrightarrow}
\newcommand{\Left}{\text{Left}}
\newcommand{\Right}{\text{Right}}
\numberwithin{equation}{section}
\title[Determinantal generalizations of instrumental variables]{Determinantal generalizations of instrumental variables}
\author[Weihs, Robinson, Dufresne et al.]{Luca Weihs}
\address{Department of Statistics, University of Washington, Seattle,
WA, U.S.A.}  \email{lucaw@uw.edu}
\author[]{Bill Robinson} \address{Department of Mathematics,
University of Maryland University College, Marlboro, MD, U.S.A.}
\email{robins.wm@gmail.com}
\author[]{Emilie Dufresne} \address{School of Mathematical Sciences,
University of Nottingham, Nottingham, UK}
\email{emilie.dufresne@nottingham.ac.uk}
\author[]{Jennifer Kenkel} \address{Department of Mathematics,
University of Utah, Salt Lake City, UT, U.S.A.}
\email{kenkel@math.utah.edu}
\author[]{Kaie Kubjas} \address{Department of Mathematics and Systems
Analysis, Aalto University, Espoo, Finland}
\email{kaie.kubjas@aalto.fi}
\author[]{Reginald L. McGee II} \address{Mathematical Biosciences
Institute, Columbus, Ohio, U.S.A.}  \email{mcgee.278@mbi.osu.edu}
\author[]{Nhan Nguyen} \address{Department of Mathematics, University
of Montana, Missoula, Montana, U.S.A.}
\email{nhan.nguyen@umontana.edu}
\author[]{Elina Robeva} \address{Department of Mathematics,
Massachusetts Institute of Technology, Cambridge, MA, U.S.A.}
\email{erobeva@mit.edu}
\author[]{Mathias Drton} \address{Department of Statistics, University
of Washington, Seattle, WA, U.S.A.}
\email{md5@uw.edu}
\date{\today}                                
\begin{document}

\begin{abstract}
  Linear structural equation models relate the components of a random
  vector using linear interdependencies and Gaussian noise.  Each such
  model can be naturally associated with a mixed graph whose vertices
  correspond to the components of the random vector.  The graph
  contains directed edges that represent the linear relationships
  between components, and bidirected edges that encode unobserved
  confounding.  We study the problem of generic identifiability, that
  is, whether a generic choice of linear and confounding effects can
  be uniquely recovered from the joint covariance matrix of the
  observed random vector.  An existing combinatorial criterion for
  establishing generic identifiability is the half-trek criterion
  (HTC), which uses the existence of trek systems in the mixed graph
  to iteratively discover generically invertible linear equation
  systems in polynomial time. By focusing on edges one at a
  time, we establish new sufficient and necessary conditions for
  generic identifiability of edge effects extending those of the
  HTC.  In particular, we show how edge coefficients can be recovered
  as quotients of subdeterminants of the covariance matrix,
  which constitutes a determinantal generalization of formulas
  obtained when using instrumental variables for identification.
\end{abstract}

\keywords{Trek separation, half-trek criterion, structural equation models, identifiability, generic identifiability}

\maketitle

\section{Introduction}

In a \emph{linear structural equation model} (L-SEM) the joint
distribution of a random vector $X=(X_1,\dots,X_n)^T$ obeys noisy
linear interdependencies. These interdependencies can be expressed
with a matrix equation of the form
\begin{align}
  X = \lambda_0 + \Lambda^TX + \epsilon, \label{eq:sem_eq}
\end{align}
where $\Lambda=(\lambda_{vw})\in\mathbb{R}^{n\times n}$ and
$\lambda_0 = (\lambda_{01},\dots,\lambda_{0n})^T\in\bR^n$ are unknown
parameters, and $\epsilon = (\epsilon_1,\dots,\epsilon_n)^T$ is a
random vector of error terms with positive definite covariance matrix
$\Omega = (\omega_{vw})$.  Then $X$ has mean vector
$(I-\Lambda)^{-T}\lambda_0$ and covariance matrix
\begin{align} \label{eq:sigma}
  \phi(\Lambda,\Omega) := (I-\Lambda)^{-T}\Omega(I-\Lambda)^{-1} = \Sigma
\end{align}
where $I$ is the $n\times n$ identity matrix. L-SEMs have been widely
applied in a variety of settings due to the clear causal
interpretation of their parameters
\citep{bollen,spirtes2000,pearl2009}.

Following an approach that dates back to \cite{wright1921,
  wright1934}, we may view $\Lambda$ and $\Omega$ as (weighted)
adjacency matrices corresponding to directed and bidirected graphs,
respectively.  This yields a natural correspondence between L-SEMs and
\emph{mixed graphs}, that is, graphs with both directed edges,
$v\to w$, and bidirected edges, $v\bi w$. More precisely, the mixed
graph $G$ is associated to the L-SEM in which $\lambda_{vw}$ is
assumed to be zero if $v\to w\not\in G$ and, similarly,
$\omega_{vw}=0$ when $v\bi w\not\in G$.  We write $\phi_G$ for the map
obtained by restricting the map $\phi$ from~(\ref{eq:sigma}) to pairs
$(\Lambda,\Omega)$ that satisfy the conditions encoded by the graph
$G$.  We note that mixed graphs used to represent L-SEMs are often
also called \emph{path diagrams}.

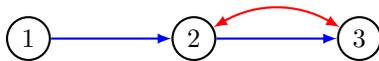
\begin{figure}[t]
  \centering
  \tikzset{
    every node/.style={circle, inner sep=1mm, minimum size=0.55cm, draw, thick, black, fill=white, text=black},
    every path/.style={thick}
  }
  \begin{tikzpicture}[align=center,node distance=2.2cm]
    \node [] (1) [] {1};
    \node [] (2) [right of=1]    {2};
    \node [] (3) [right of=2]    {3};

    \draw[blue] [-latex] (1) edge (2);
    \draw[blue] [-latex] (2) edge (3);
    \draw[red] [latex-latex, bend left] (2) edge (3);
  \end{tikzpicture}

  \caption{The mixed graph for the instrumental variable model.} \label{fig:instrumental-variables}
\end{figure}

\begin{exmp}
  The mixed graph in Figure \ref{fig:instrumental-variables} corresponds to the well-known instrumental variable model \citep{didelez2010}. In equations, this model asserts that
  \begin{align*}
    X_1 = \lambda_{01} + \epsilon_1, \quad X_2 = \lambda_{02} + \lambda_{12}X_1 + \epsilon_2, \text{ and} \quad X_3 = \lambda_{03} + \lambda_{23}X_2 + \epsilon_3,
  \end{align*}
  where $\epsilon$ has 0 mean and covariance matrix
  \begin{align*}
    \Omega = \left(
    \begin{matrix}
      \omega_{11} & 0 & 0 \\
      0 & \omega_{22} & \omega_{23} \\
      0 & \omega_{23} & \omega_{33}
    \end{matrix}\right).
  \end{align*}
  In this model, the random vector $X=(X_1,X_2,X_3)$ has covariance matrix
  \begin{align*}
    \Sigma &= \left(
    \begin{matrix}
      1 & -\lambda_{12} & 0 \\
      0 & 1 & -\lambda_{23} \\
      0 & 0 & 1
    \end{matrix}\right)^{-T}
              \left(
              \begin{matrix}
                \omega_{11} & 0 & 0 \\
                0 & \omega_{22} & \omega_{23} \\
                0 & \omega_{23} & \omega_{33}
              \end{matrix}\right)
                                  \left(
                                  \begin{matrix}
                                    1 & -\lambda_{12} & 0 \\
                                    0 & 1 & -\lambda_{23} \\
                                    0 & 0 & 1
                                  \end{matrix}\right)^{-1}\\
    &= \left(
\begin{array}{ccc}
 \omega_{11} & \lambda_{12} \omega_{11} & \lambda_{12} \lambda_{23} \omega_{11} \\
 \lambda_{12} \omega_{11} & \omega_{11} \lambda_{12}^2+\omega_{22} & \lambda_{23}
   \omega_{11} \lambda_{12}^2+\lambda_{23} \omega_{22}+\omega_{23} \\
 \lambda_{12} \lambda_{23} \omega_{11} & \lambda_{23} \omega_{11} \lambda
  _{12}^2+\lambda_{23} \omega_{22}+\omega_{23} & w_{33}+2\omega_{23}\lambda_{23}+\lambda_{23}^2\sigma_{22} \\
\end{array}
\right).
  \end{align*}
\end{exmp}
\medskip

A first question that arises when specifying an L-SEM via a mixed
graph $G$ is whether the map $\phi_G$ is injective, that is, whether any
$(\Lambda,\Omega)$ in the domain of $\phi_G$ can be uniquely recovered
from the covariance matrix $\phi_G(\Lambda,\Omega)$.  When this
injectivity holds we say that the model and also simply the graph $G$
is \emph{globally identifiable}.  Whether or not 
global identifiability holds can be decided in polynomial time
\citep{drton2011,shpitser:2006,tian:pearl:2002}.  However, in
many cases global identifiability is too strong a condition.  Indeed,
the canonical instrumental variables model is not globally
identifiable.

We will be instead interested in \emph{generic identifiability}, that
is, whether $(\Lambda,\Omega)$ can be recovered from
$\phi_G(\Lambda,\Omega)$ with probability 1 when choosing
$(\Lambda,\Omega)$ from any continuous distribution on the domain of
$\phi_G$.  A current state-of-the-art, polynomial time verifiable,
criterion for checking generic identifiability of a given mixed graph
is the half-trek criterion (HTC) of \citet{halftrek}, with
generalizations by \cite{chen2014,chen:2015,drton2016}.  The
sufficient condition that is part of the HTC operates by iteratively
discovering invertible linear equation systems in the $\Lambda$
parameters which it uses to prove generic identifiability.  A
necessary condition given by the HTC detects cases in which the
Jacobian matrix of $\phi_G$ fails to attain full column rank which
implies that the parameterization $\phi_G$ is generically
infinite-to-one.  However, there remain a considerable number of cases
in which the HTC remains inconclusive, that is, the graph satisfies
the necessary but not the sufficient condition for generic
identifiability.

We extend the applicability of the HTC in two ways.  First, we show
how the theorems on trek separation by \cite{sullivant2010} can be
used to discover determinantal relations that in turn can be used to
prove the generic identifiability of individual edge coefficients in
L-SEMs.  This method generalizes the use of conditional independence
in known instrumental variable techniques; compare
e.g.~\cite{Brito02}.  Once we have shown that individual edges are
generically identifiable with this new method, it would be ideal if
identified edges could be integrated into the equation systems
discovered by the HTC to prove that even more edges are generically
identifiable.  Unfortunately, the HTC is not well suited to integrate
single edge identifications as it operates simultaneously on all edges
incoming to a given node.  Our second contribution resolves this issue
by providing an \emph{edgewise} half-trek criterion which operates on
subsets of a node's parents, rather than all parents at once.  This
edgewise criterion often identifies many more coefficients than the
usual HTC.  We note that, in the process of preparing this manuscript
we discovered independent work of \cite{chen2016}; some of our results
can be seen as a generalization of results in his work.

The rest of this paper is organized as follows. In Section
\ref{sec:pre}, we give a brief overview of the necessary background on
mixed graphs, L-SEMs, and the half-trek criterion.  In Section
\ref{sec:trek-sep}, we show how trek-separation allows the generic
identification of edge coefficients as quotients of
subdeterminants. We introduce the edgewise half-trek criterion in
Section \ref{sec:edgewise-id} and we discuss necessary conditions for
the generic identifiability of edge coefficients in Section
\ref{sec:edgewise-non-iden}. Computational experiments showing the
applicability of our sufficient conditions follow in Section
\ref{sec:comp}, and we finish with a brief conclusion in Section
\ref{sec:conclusion}.  Some longer proofs are deferred to the
appendix.

\section{Preliminaries} \label{sec:pre}

We assume some familiarity with the graphical representation of
structural equation models and only give a brief overview of our
objects of study.  A more in-depth introduction can be found, for example, in
\cite{pearl2009} or, with a focus on the linear case considered here,
in \cite{drton:review:2017}.

\subsection{Mixed Graphs and Covariance Matrices} \label{sec:mixed-graphs}

Nonzero covariances in an L-SEM may arise through direct or
through confounding effects.  Mixed graphs with two types of edges
have been used to represent these two sources of dependences.

\begin{defn}[Mixed Graph]
  A \emph{mixed graph} on $n$ vertices is a triple $G=(V,D,B)$ where
  $V=\{1,\dots,n\}$ is the vertex set, $D\subset V\times V$ are the
  directed edges, and $B\subset V\times V$ are the bidirected edges.
  We require that there be no self-loops, so $(v,v)\not\in D,B$ for
  all $v\in V$. If $(v,w)\in D$, we will write $v\to w\in G$ and if
  $(v,w)\in B$, we will write $v\bi w\in G$. As bidirected edges are
  symmetric we will also require that $B$ is symmetric, so that
  $(v,w)\in B \iff (w,v)\in B$.
\end{defn}

Let $v$ and $w$ be two vertices of a mixed graph $G=(V,D,B)$.  A
\emph{path} from $v$ to $w$ is any sequence of edges from $D$ or $B$
beginning at $v$ and ending at $w$.  Here, we allow that directed
edges be traversed against their natural direction (i.e., from head to
tail).  We also allow repeated vertices on a path.  Sometimes, such
paths are referred to as walks or also semi-walks.  A path from $v$ to
$w$ is \emph{directed} if all of its edges are directed and point in
the same direction, away from $v$ and towards $w$.

\begin{defn}[Treks and half-treks]
  (a)
  A path $\pi$ from a \emph{source} $v$ to a \emph{target} $w$ is a
  \emph{trek} if it has no colliding arrowheads, that is, $\pi$ is of
  the form
  \begin{align*}
    &v^{L}_{l}\leftarrow v^{L}_{l-1} \leftarrow \dots\leftarrow
    v^{L}_{0} \longleftrightarrow v^{R}_{0} \to v^{R}_{1} \to \dots
    \to v^{R}_{r-1} \to v^{R}_{r} \quad\text{or}\\
    &v^{L}_{l}\leftarrow v^{L}_{l-1} \leftarrow \dots\leftarrow v^{L}_{1} \leftarrow v^T \to v^{R}_{1} \to \dots \to v^{R}_{r-1} \to v^{R}_{r},
  \end{align*}
  where $v^L_l = v$, $v^R_r = w$, and $v^T$ is the \emph{top} node.
  Each trek $\pi$ has a left-hand side $\Left(\pi)$ and a right-hand
  side $\Right(\pi)$.  In the former case,
  $\Left(\pi) = \{v^{L}_{0},\dots,v^{L}_{l}\}$ and
  $\Right(\pi) = \{v^{R}_{0},\dots,v^{R}_{r}\}$.  In the latter case,
  $\Left(\pi) = \{v^T, v^{L}_{1},\dots,v^{L}_{l}\}$ and
  $\Right(\pi) = \{v^T, v^{R}_{1},\dots,v^{R}_{r}\}$, with $v^T$ a part
  of both sides.

  (b) A trek $\pi$ is a \emph{half-trek} if $|\Left(\pi)| =
  1$.  In this case $\pi$ is of the form
  \begin{align*}
    &v^{L}_{0} \longleftrightarrow v^{R}_{0} \to v^{R}_{1} \to \dots
      \to v^{R}_{r-1} \to v^{R}_{r} \quad\text{or}\quad
      v^T \to v^{R}_{1} \to \dots \to v^{R}_{r-1} \to v^{R}_{r}.
  \end{align*}
  In particular, a half-trek from $v$ to $w$ is a trek from $v$ to $w$ which
  is either empty, begins with a bidirected edge, or begins with a directed
  edge pointing away from $v$.
\end{defn}

Some terminology is needed to reference the local neighborhood
structure of a vertex $v$.  For the directed part $(V,D)$,
it is standard to define the set of \emph{parents} and the set of
\emph{descendents} of $v$ as
\begin{align*}
  \pa(v) &= \{w\in V:w\to v\in G\} , \\
  \des(v) &= \{w\in V: \exists\text{ a non-empty directed path from
           $v$ to $w$ in $G$}\},
\end{align*}
respectively.  The nodes incident to a bidirected edge can be thought
of as having a common (latent) parent and thus we refer to the
bidirected neighbors as \emph{siblings} and define
\begin{align*}
\sib(v) &= \{w\in V:w\bi v\in G\}.
\end{align*}
 Finally, we denote the sets of nodes that are \emph{trek reachable}
 or \emph{half-trek reachable}  from $v$ by
\begin{align*}
\tr(v)  &= \{w\in V: \exists\text{ a non-empty trek from $v$ to $w$ in $G$}\}, \\
\htr(v)  &= \{w\in V:
\exists\text{ a non-empty half-trek from $v$ to $w$ in
  $G$}\}.
\end{align*}

Two sets of matrices may be associated with a given mixed graph
$G=(V,D,B)$.  First, $\bR_{\mathrm{reg}}^D$ is the set of real
$n\times n$ matrices $\Lambda=(\lambda_{vw})$ with support $D$, i.e.,
those matrices $\Lambda$ with $\lambda_{vw}\not=0$ implying
$v\to w\in G$ and for which $I - \Lambda$ invertible.  Second,
$\mathit{PD}(B)$ is the set of positive definite matrices with support
$B$, i.e., if $v\not=w$, then $\omega_{vw}\not=0$ implies
$v\bi w\in G$.  Based on~(\ref{eq:sigma}), the distributions in the
L-SEM given by $G$ have a covariance matrix $\Sigma$ that is
parameterized by the map
\begin{align}
  \phi_G:(\Lambda,\Omega) \mapsto (I-\Lambda)^{-T}\Omega(I-\Lambda)^{-1}
\end{align}
with domain $\Theta := \bR_{\mathrm{reg}}^D\times\mathit{PD}(B)$.  

\begin{rem}
  Our focus is solely on covariance matrices.  Indeed, in
  the traditional case where the errors $\epsilon$
  in~(\ref{eq:sem_eq}) follow a multivariate normal distribution the
  covariance matrix contains all available information about the
  parameters $(\Lambda,\Omega)$.
\end{rem}

Subsequently, the matrices $\Lambda,\Omega$ and $\Sigma$ will also be
regarded as matrices of indeterminants.  The entries of
$(I-\Lambda)^{-1} = I + \sum_{k=1}^\infty \Lambda^k$ may then be
interpreted as formal power series.  Let $\Lambda$ and $\Omega$ be
matrices of indeterminants with zero pattern corresponding to $G$.
Then $\Sigma=\phi_G(\Lambda,\Omega)$ has entries that are formal power
series whose form is described by the Trek Rule of \citet{wright1921},
see also \citet*{spirtes2000}. The Trek rule states that for every
$v,w\in V$ the corresponding entry of $\phi_G(\Lambda, \Omega)$ is the
sum of all trek monomials corresponding to treks from $v$ to $w$.

\begin{defn}[Trek Monomial]
  Let $\cT(v,w)$ be the set of all treks from $v$ to $w$ in $G$.  If
  $\pi\in \cT(v,w)$ contains no bidirected edge and has top node $z$,
  its \emph{trek monomial} is defined as
  \begin{align*}
    \pi(\Lambda,\Omega) = \omega_{zz}\prod_{x\to y\in \pi} \lambda_{xy}.
  \end{align*}
  If $\pi$ contains a bidirected edge connecting $u,z\in V$, then its
  trek monomial is
  \begin{align*}
    \pi(\Lambda,\Omega) = \omega_{uz}\prod_{x\to y\in \pi} \lambda_{xy}.
  \end{align*}
\end{defn}

\begin{prop}[Trek Rule] \label{prop:trek-rule} The covariance matrix
  $\Sigma = \phi_G(\Lambda,\Omega)$ corresponding to a mixed graph $G$
  satisfies
  \begin{align*}
    \Sigma_{vw} = \sum_{\pi\in \cT(v,w)} \pi(\Lambda,\Omega), \quad
    v,w\in V.
  \end{align*}
\end{prop}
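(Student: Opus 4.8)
The plan is to expand $(I-\Lambda)^{-1}$ as a formal power series and then match the resulting terms with treks. Since the entries of $\Lambda^k$ are homogeneous polynomials of degree $k$ in the indeterminates $\lambda_{xy}$, the sum $\sum_{k\ge 0}\Lambda^k$ is a well-defined matrix of formal power series, and from $(I-\Lambda)\bigl(\sum_{k\ge 0}\Lambda^k\bigr)=I$ it equals $(I-\Lambda)^{-1}$. First I would record the combinatorial meaning of the entries: $(\Lambda^k)_{xy}$ is the sum of $\prod_{i=1}^{k}\lambda_{u_{i-1}u_i}$ over all directed walks $x=u_0\to u_1\to\cdots\to u_k=y$ in $G$, a walk of length $0$ contributing $1$ when $x=y$; hence $[(I-\Lambda)^{-1}]_{xy}=\sum_{P}\prod_{x'\to y'\in P}\lambda_{x'y'}$, the sum running over all directed walks $P$ from $x$ to $y$.

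Next, using $(I-\Lambda)^{-T}=\bigl((I-\Lambda)^{-1}\bigr)^{T}$, I would write
\begin{align*}
  \Sigma_{vw}=\sum_{a,b\in V}[(I-\Lambda)^{-1}]_{av}\;\Omega_{ab}\;[(I-\Lambda)^{-1}]_{bw},
\end{align*}
and substitute the walk expansion of each factor. This expresses $\Sigma_{vw}$ as a sum over triples $(P,(a,b),Q)$ with $P$ a directed walk from $a$ to $v$, $Q$ a directed walk from $b$ to $w$, and $\Omega_{ab}$ the corresponding entry of $\Omega$ (which, as an element of the polynomial ring, is $\omega_{aa}$ if $a=b$, is $\omega_{ab}$ if $a\ne b$ and $a\bi b\in G$, and is $0$ otherwise). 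Rearranging the sum is legitimate at the level of formal power series because each monomial of a fixed degree $d$ receives contributions from only finitely many such triples, namely those whose two walks have total length $d$, or $d-1$ in the bidirected case.

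The key step is to recognize $(P,(a,b),Q)$ as an encoding of a trek: reading $P$ backwards, then the edge $a\bi b$ (or, when $a=b$, nothing, with $a$ serving as the top node), then $Q$, produces a path from $v$ to $w$ with no colliding arrowheads. I would check that this assignment is a bijection onto $\cT(v,w)$: a trek $\pi$ uniquely determines whether it contains a bidirected edge; if it does, that edge fixes $a$ and $b$ and splits $\pi$ into $P$ (the reverse of the left-hand directed portion running from $a$ down to $v$) and $Q$ (the right-hand portion from $b$ to $w$), with $\Left(\pi)$ and $\Right(\pi)$ as in the definition; if it does not, its top node is $a=b$ and the split is again forced. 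Under this bijection the monomial attached to $(P,(a,b),Q)$, namely $\bigl(\prod_{x'\to y'\in P}\lambda_{x'y'}\bigr)\,\Omega_{ab}\,\bigl(\prod_{x'\to y'\in Q}\lambda_{x'y'}\bigr)$, is exactly $\pi(\Lambda,\Omega)$ from the Trek Monomial definition, so summing over all treks yields $\Sigma_{vw}=\sum_{\pi\in\cT(v,w)}\pi(\Lambda,\Omega)$.

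The main obstacle is bookkeeping rather than conceptual depth: one must be careful that the two shapes appearing in the definition of a trek — the one containing a bidirected edge and the one with a top node — correspond exactly to the cases $a\ne b$ and $a=b$, that walks with repeated vertices are permitted on both sides and produce no double counting, and that the formal-power-series rearrangement is justified as noted above. With the bijection set up cleanly, the identification of each monomial with the corresponding trek monomial is immediate from the definitions.
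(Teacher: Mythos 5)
Your proof is correct. The paper itself offers no proof of this proposition --- it is stated as a classical result of Wright, with citations to \cite{wright1921} and \cite{spirtes2000} --- and your argument (expand $(I-\Lambda)^{-1}=\sum_{k\ge 0}\Lambda^k$ as a formal power series, interpret $(\Lambda^k)_{xy}$ as a sum over directed walks, and exhibit the bijection between triples $(P,(a,b),Q)$ and treks in $\cT(v,w)$) is the standard derivation. You handle the two delicate points correctly: the case split $a=b$ versus $a\ne b$ matches exactly the two shapes in the definition of a trek, since $B$ contains no self-loops so a bidirected edge always joins distinct vertices, and the rearrangement of the sum is justified degree by degree in the formal power series ring.
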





\subsection{Generic Identifiability} \label{sec:htc}

We now formally introduce our problem of interest and review some of
the prior work our results build on.  We recall that an
\emph{algebraic set} is the zero-set of a collection of polynomials.
An algebraic set that is a proper subset of Euclidean space has
measure zero; see, e.g., the lemma in \cite{okamoto:1973}.

\begin{defn}[Generic Identifiability]
  (a) The model given by a mixed graph $G$ is \emph{generically
    identifiable} if there exists a proper algebraic subset
  $A\subset \Theta$ such that the fiber
  $\cF(\Lambda,\Omega) := \phi_G^{-1}(\{\phi_G(\Lambda,\Omega)\})$ is
  a singleton set, that is, it satisfies
  \begin{align*}
    \cF(\Lambda,\Omega) = \{(\Lambda,\Omega)\}
  \end{align*}
  for all $(\Lambda,\Omega)\in \Theta\setminus A$.  In this case we
  will say, for simplicity, that $G$ is generically identifiable.
  
  (b) Let $\proj_{v\to w}$ be the projection
  $(\Lambda,\Omega)\mapsto \lambda_{vw}$ for $v\to w\in G$.  We say
  that the edge coefficient $\lambda_{vw}$ is \emph{generically
    identifiable} if there exists a proper algebraic subset
  $A\subset \Theta$ such that
  $\proj_{v\to w}(\cF(\Lambda,\Omega))=\{\lambda_{vw}\}$ for all
  $(\Lambda,\Omega)\in \Theta\setminus A$.  In this case, we will say
  that the edge $v\to w$ is generically identifiable.
\end{defn}

In all examples we know of, if generic identifiability holds,
then the parameters can in fact be recovered using rational formulas.

\begin{defn}[Rational Identifiability]
  (a) A mixed graph $G$, or rather the model it defines, is
  \emph{rationally identifiable} if there exists a rational map $\psi$
  and a proper algebraic subset $A\subset \Theta$ such that
  $\psi\circ \phi_G$ is the identity on $\Theta\setminus A$.

  (b) An edge $v\to w\in G$, or rather the coefficient $\lambda_{vw}$,
  is \emph{rationally identifiable} if there exists a rational
  function $\psi$ and a proper algebraic subset $A\subset \Theta$ such
  that $\psi\circ \phi_G(\Lambda,\Omega)=\lambda_{vw}$ for all
  $(\Lambda,\Omega)\in \Theta\setminus A$.
\end{defn}

We now introduce the half-trek criterion (HTC) of \cite{halftrek}. We
generalize this criterion in Section \ref{sec:edgewise-id}.

\begin{defn}[Trek and Half-Trek Systems]
  Let $\Pi = \{\pi_1,\dots,\pi_m\}$ be a collection of treks in
  $G$ and let $S,T$ be the set of sources and targets of the $\pi_i$
  respectively.  Then we say that $\Pi$ is a \emph{system of treks}
  from $S$ to $T$. If each $\pi_i$ is a half-trek, then $\Pi$ is a
  \emph{system of half-treks}. A collection
  $\Pi=\{\pi_1,\dots,\pi_m\}$ of treks is said to have \emph{no sided
    intersection} if
  \begin{align*}
    \Left(\pi_i)\cap\Left(\pi_j) =\emptyset = \Right(\pi_i)\cap\Right(\pi_j), \ \forall i\not=j.
  \end{align*}
\end{defn}

As our focus will be on the identification of individual edges in $G$ we do not state the identifiability result of \citet{halftrek} in its usual form, instead we present a slightly modified version which is easily seen to be implied by the proof of Theorem 1 in \cite{halftrek}.

\begin{defn}
  A set of nodes $Y\subset V$ satisfies the \emph{half-trek criterion} with respect to a vertex $v\in V$ if
  \begin{enumerate}[(i)]
  \item $|Y|=|\pa(v)|$,
  \item $Y\cap (\{v\}\cup \sib(v)) = \emptyset$, and
  \item there is a system of half-treks with no sided intersection from $Y$ to $\pa(v)$.
  \end{enumerate}
\end{defn}

\begin{thm}[HTC-identifiability] \label{thm:htc-id}
  Suppose that in the mixed graph $G=(V,D,B)$ the set $Y\subset V$
  satisfies the half-trek criterion with respect to $v\in V$.  If all
  directed edges $u\to y\in G$ with head $y\in \htr(v)\cap Y$ are
  generically (rationally) identifiable, then all directed edges with
  $v$ as a head are generically (rationally) identifiable.
\end{thm}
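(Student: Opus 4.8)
The plan is to mimic the structure of the original HTC-identifiability proof of \citet{halftrek}, tracking where the hypothesis ``$Y$ satisfies the half-trek criterion with respect to $v$'' is used, and observing that it suffices to identify the edges into $v$ rather than the full parameter vector. First I would recall the linear-algebraic identity underlying the HTC. Writing $\Sigma = \phi_G(\Lambda,\Omega)$, one has $(I-\Lambda)^T\Sigma = \Omega(I-\Lambda)^{-1}$, and for a node $v$ the relevant rows of this identity, read off at columns indexed by $Y$, give a linear system in the unknown entries $\{\lambda_{uv} : u\in\pa(v)\}$. Precisely, letting $\Lambda_{\pa(v),v}$ denote the column vector of edge coefficients into $v$, there is a $|Y|\times|\pa(v)|$ matrix $A(\Sigma,\Lambda)$ — with entries that are polynomials in $\Sigma$ and in those edge coefficients $\lambda_{uy}$ with $y\in\htr(v)\cap Y$ — and a vector $b(\Sigma,\Lambda)$ of the same sort, such that $A\cdot\Lambda_{\pa(v),v} = b$. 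The entries of $A$ are (up to sign) either entries of $\Sigma$ or, in the rows indexed by $y\in\htr(v)\cap Y$, entries of $(\Lambda^T\Sigma)$; the latter is where the hypothesis that the edges $u\to y$ with $y\in\htr(v)\cap Y$ are already (generically/rationally) identified enters, since then those entries are known rational functions of $\Sigma$.

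The crux is then to show that $A$ is generically invertible, i.e.\ $\det A$ is not the zero polynomial in the indeterminates $(\Lambda,\Omega)$. This is exactly the Gessel--Viennot / trek-separation argument from \cite{halftrek}: by the Trek Rule (Proposition \ref{prop:trek-rule}) and a Lindström--Gessel--Viennot-type lemma, the determinant $\det A$ expands as a signed sum over families of half-trek systems from $Y$ to $\pa(v)$, and the no-sided-intersection condition (iii), combined with $|Y|=|\pa(v)|$ (condition (i)) and $Y\cap(\{v\}\cup\sib(v))=\emptyset$ (condition (ii), which guarantees the relevant treks are genuinely half-treks avoiding forbidden sources), ensures a nonzero contribution that cannot be cancelled — so $\det A\not\equiv 0$. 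I would cite this as following from the proof of Theorem 1 in \cite{halftrek} essentially verbatim; conditions (i)--(iii) are precisely what make that combinatorial argument go through, and nothing about the argument requires that the edges into nodes \emph{other} than $v$ be handled simultaneously.

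Granting $\det A\not\equiv 0$, the set where $\det A$ vanishes is a proper algebraic subset $A_0\subset\Theta$; enlarging it by the proper algebraic subsets on which the finitely many already-established rational formulas for the edges $u\to y$, $y\in\htr(v)\cap Y$, fail to be valid, we obtain a proper algebraic subset $A\subset\Theta$ off which $A$ is invertible and $b$ is a known rational function of $\Sigma$. Then $\Lambda_{\pa(v),v} = A^{-1}b$ expresses every edge coefficient $\lambda_{uv}$, $u\in\pa(v)$, as a rational function of $\Sigma$, valid on $\Theta\setminus A$; this gives rational identifiability of all edges with head $v$, hence generic identifiability. (If the hypothesis only gives generic, not rational, identifiability of the edges into $\htr(v)\cap Y$, the same argument goes through with ``rational function of $\Sigma$'' weakened appropriately, using that a generically defined single-valued algebraic function composed with a rational one is still generically single-valued.) The main obstacle, and the only genuinely substantive point, is the non-vanishing of $\det A$ — and this is precisely the content already supplied by the trek-system combinatorics in \cite{halftrek}; the rest is bookkeeping to confine everything to a proper algebraic subset. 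I would therefore present the proof as: (1) set up the linear system $A\Lambda_{\pa(v),v}=b$; (2) invoke the combinatorial nonvanishing of $\det A$ from \cite{halftrek}; (3) solve and collect the bad sets into one proper algebraic $A$.
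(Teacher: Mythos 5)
Your proposal is correct and follows essentially the same route as the source: the paper itself gives no proof of Theorem~\ref{thm:htc-id}, deferring to the proof of Theorem~1 in \cite{halftrek}, and that proof is exactly your three steps (set up the linear system $A\,\Lambda_{\pa(v),v}=b$ with rows $[(I-\Lambda)^T\Sigma]_{y,\pa(v)}$ for $y\in\htr(v)\cap Y$ and $\Sigma_{y,\pa(v)}$ otherwise, establish generic nonvanishing of $\det A$ from the no-sided-intersection half-trek system, then solve off a proper algebraic subset). The same strategy reappears verbatim in the paper's own proof of the generalization, Theorem~\ref{thm:edgewise-id} together with Lemma~\ref{lem:A-invertible}, so no further comparison is needed.
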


The sufficient condition for rational identifiability of $G$ in
\cite{halftrek} is obtained through iterative application of Theorem
\ref{thm:htc-id}.

\section{Trek Separation and Identification by Ratios of Determinants}\label{sec:trek-sep}

Let $\Lambda$ and $\Omega$ be matrices of indeterminants corresponding
to a mixed graph $G = (V,D,B)$ as specified in
Section~\ref{sec:mixed-graphs}.  Let $S,T\subset V$, and let
$\Sigma_{S,T}$ be the submatrix of
$\Sigma=\phi_G(\Lambda,\Omega)\in\mathbb{R}^{n\times n}$ obtained by
retaining only the rows and columns indexed by $S$ and $T$,
respectively.  The (generic) rank of such a submatrix $\Sigma_{S,T}$
can be completely characterized by considering the trek systems
between the vertices in $S$ and $T$. The formal statement of this
result follows.

\begin{defn}[t-separation]
  A pair of sets $(L,R)$ with $L,R\subset V$ \emph{t-separates} the
  sets $S,T\subset V$ if every trek between a vertex $s\in S$ and a
  vertex $t\in T$ intersects $L$ on the left or $R$ on the right.
\end{defn}

In this definition, the symbols $L$ and $R$ are chosen to suggest left
and right.  Similarly, $S$ and $T$ are chosen to indicate sources and
targets, respectively.

\begin{thm}[\cite{sullivant2010}, \cite{draisma2013}] \label{thm:t-sep}
  The submatrix $\Sigma_{S,T}$ has generic rank $\leq r$ if and only if there exist sets $L,R\subset V$ with $|L|+|R|\leq r$ such that $(L,R)$ t-separates $S$ and $T$.
\end{thm}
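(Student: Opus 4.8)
The plan is to prove Theorem~\ref{thm:t-sep} (the characterization of the generic rank of $\Sigma_{S,T}$ via t-separation) by combining the Trek Rule with the Lindström--Gessel--Viennot style lemma that expresses minors of $\Sigma$ as signed sums over trek systems, and then invoking the Cauchy--Binet identity. First I would establish the ``min-max''-type identity
\begin{align*}
  \det \Sigma_{S,T} = \sum_{\Pi} \pm\, \Pi(\Lambda,\Omega),
\end{align*}
where the sum ranges over all systems of treks $\Pi$ from $S$ to $T$ that have no sided intersection, and $\Pi(\Lambda,\Omega)$ is the product of the individual trek monomials. This is the central algebraic step: one writes $\Sigma = (I-\Lambda)^{-T}\Omega(I-\Lambda)^{-1}$, applies Cauchy--Binet to this product of three matrices to express $\det\Sigma_{S,T}$ as a sum over intermediate index sets of products of entries of $(I-\Lambda)^{-T}$, $\Omega$, and $(I-\Lambda)^{-1}$; the entries of $(I-\Lambda)^{-1}$ are themselves sums over directed paths, and a sign-reversing involution on pairs of paths sharing a vertex kills all the ``colliding'' contributions, leaving exactly the non-sided-intersecting trek systems. (This identity, and more generally the statement for arbitrary minors $\det\Sigma_{S,T}$ with $|S|=|T|$, is the technical heart of \cite{sullivant2010}; I would cite or reproduce it.)

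Granting this identity, the theorem follows by a rank argument. For the ``if'' direction: suppose $(L,R)$ t-separates $S$ and $T$ with $|L|+|R|\le r$. I claim every $(r+1)\times(r+1)$ minor $\det\Sigma_{S',T'}$ (with $S'\subseteq S$, $T'\subseteq T$, $|S'|=|T'|=r+1$) vanishes identically as a polynomial in the indeterminates. Indeed, by the identity each such minor is a signed sum over non-sided-intersecting trek systems $\Pi$ from $S'$ to $T'$; in any such system the $r+1$ treks have pairwise disjoint left sides and pairwise disjoint right sides, so a trek meeting $L$ on the left ``uses up'' one element of $L$ and a trek meeting $R$ on the right uses up one element of $R$, and since $|L|+|R|\le r<r+1$ no such system can have every trek blocked --- contradiction with t-separation. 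Hence no non-sided-intersecting system exists, the sum is empty, the minor is zero, and $\Sigma_{S,T}$ has generic rank $\le r$.

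For the ``only if'' direction: suppose no t-separating pair $(L,R)$ with $|L|+|R|\le r$ exists. By a combinatorial max-flow/min-cut argument in an auxiliary directed graph built from $G$ (split each vertex into a ``left copy'' and a ``right copy'', route treks as paths from sources to targets, and observe that a vertex cut of size $\le r$ would furnish a t-separator of total size $\le r$), one obtains a system of $r+1$ treks from $r+1$ sources in $S$ to $r+1$ targets in $T$ with no sided intersection. By the identity, the corresponding minor $\det\Sigma_{S',T'}$ is a signed sum of trek monomials; I would then argue that this polynomial is not identically zero --- e.g.\ by exhibiting a monomial (say, one coming from a trek system of minimal total length, or by a leading-term/weight argument) that cannot be cancelled by any other system --- so $\Sigma_{S,T}$ has generic rank $\ge r+1$, i.e.\ $>r$.

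The main obstacle, as usual in this circle of results, is the non-cancellation claim in the ``only if'' direction: a priori the signed sum over trek systems could vanish even when the index set is non-empty, so one needs either a clever choice of monomial that survives all cancellations or a genericity/dimension argument showing the polynomial is nonzero. The cleanest route is to reduce to the case where one picks, among all non-sided-intersecting trek systems realizing the minor, a monomial that is extremal with respect to a suitable grading (for instance, assign generic positive weights to the edges and take the system of minimal total weight; extremality forces uniqueness of that monomial and hence non-cancellation). A secondary technical point is making the Cauchy--Binet/involution argument fully rigorous at the level of formal power series (the entries of $(I-\Lambda)^{-1}$ are infinite series), but this is handled by working in the ring of formal power series $\mathbb{R}[[\Lambda,\Omega]]$ throughout, where all manipulations are legitimate, and only afterwards specializing to real parameter values to read off generic rank.
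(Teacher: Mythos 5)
The paper does not actually prove this theorem: it imports it from \cite{sullivant2010} (acyclic case) and \cite{draisma2013} (general mixed graphs), so there is no in-paper proof to compare against. Your outline is, however, essentially the proof from those sources, and it matches the machinery the paper itself deploys in Appendix~\ref{app:proof-0-det-generalization} for the closely related Lemma~\ref{lem:0-det-generalization}: factor $\Sigma=(I-\Lambda)^{-T}\Omega(I-\Lambda)^{-1}$, apply Cauchy--Binet, use a Gessel--Viennot--Lindstr\"om involution to kill contributions from intersecting systems, and convert t-separators into vertex cuts in the doubled flow graph. Your ``if'' direction is correct and complete as stated: in a non-sided-intersecting system of $r+1$ treks the left sides are pairwise disjoint and the right sides are pairwise disjoint, so at most $|L|+|R|\le r$ treks can be blocked, hence no such system exists and every $(r+1)\times(r+1)$ minor vanishes identically.

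Two points need attention. First, the non-cancellation step in the ``only if'' direction is the genuine crux, and your proposed fix (take a monomial extremal for a generic weighting) is not airtight as stated: two distinct trek systems can contribute the \emph{same} monomial, which is exactly the difficulty that Lemma~1 of \cite{halftrek} has to work hard to overcome even in the more rigid half-trek setting. The cleaner and standard route is specialization plus lower semicontinuity of rank: fix one non-sided-intersecting system $\Pi$ of size $r+1$, set to zero every $\lambda_{uv}$ and every $\omega_{uv}$ not used by $\Pi$, observe that at this parameter value the left and right path families are vertex-disjoint so the relevant Cauchy--Binet factors become monomial matrices of full rank and $\Sigma_{S',T'}$ visibly has rank $r+1$, and conclude that the generic rank is at least $r+1$ because the locus where the rank drops is Zariski closed. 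Second, your appeal to Gessel--Viennot--Lindstr\"om implicitly assumes acyclicity; for cyclic graphs the entries of $(I-\Lambda)^{-1}$ are sums over walks with repeated vertices, and the sign-reversing involution requires the loop-erased-walk generalization (Theorem~6.1 of \cite{fomin2001}, which the paper's appendix also invokes). Neither issue is fatal, but both must be addressed before the argument is complete.
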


Theorem 2.7 of \cite{sullivant2010} established this result for
acyclic mixed graphs while \cite{draisma2013} extended the result to
all mixed graphs and even gave an explicit representation of the
rational form of the subdeterminant $|\Sigma_{S,T}|$, for $|S|=|T|$. An immediate
corollary to the above theorem, considering the proof of Theorem 2.17
in \cite{sullivant2010}, rephrases its statement in terms of maximum
flows in a special graph. For an introduction to maximum flow, and the
well-known Max-flow Min-cut Theorem, see the book by
\cite{cormen2009}. Note that standard max-flow min-cut framework does not
allow vertices to have maximum capacities or for there to be multiple
sources and targets, introducing these modifications is, however, trivial
and the resulting theorem is sometimes called the Generalized Max-flow
Min-cut Theorem.

\begin{cor} \label{cor:t-sep} Let $G_{\mathrm{flow}} = (V_f, D_f)$ be
  the directed graph with $V_f = \{1,\dots,n\}\cup\{1',\dots,n'\}$ and
  $D_f$ containing the following edges:
  \begin{align}
     i\to j &\ \text{ if }   j\to i\in G, \label{eq:rev-dir}\\
     i\to i' &\ \text{ for all } i\in V, \label{eq:node-to-copy}\\
    i \to j' &\ \text{ if } i\bi j \in G, \text{ and} \label{eq:bi-edges}\\
     i'\to j' &\ \text{ if } i\to j\in G. \label{eq:dir}
  \end{align}
  Turn $G_{\mathrm{flow}}$ into a network by giving all vertices and
  edges capacity 1. Let
  $S=\{s_1,\dots,s_k\}, T=\{t_1,\dots,t_m\}\subset V$.  Then
  $\Sigma_{S,T}$ has generic rank $r$ if and only if the max-flow from
  $s_1,\dots,s_k$ to $t_1',\dots,t_m'$ in $G_{\mathrm{flow}}$ is $r$.
\end{cor}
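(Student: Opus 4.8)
The plan is to deduce Corollary~\ref{cor:t-sep} from Theorem~\ref{thm:t-sep} by showing that the vertex cuts of $G_{\mathrm{flow}}$ separating $S$ from $T' := \{t' : t\in T\}$ are in bijection (with matching sizes) with the pairs $(L,R)$ that t-separate $S$ and $T$ in $G$, and then invoking the Generalized Max-flow Min-cut Theorem to pass from minimum cuts to maximum flows. First I would recall the construction of $G_{\mathrm{flow}}$: it is the ``split'' of a graph in which each vertex $i$ of $G$ is duplicated into an ``in-copy'' $i$ and an ``out-copy'' $i'$, joined by the capacity-$1$ edge $i\to i'$ from~(\ref{eq:node-to-copy}); a directed edge $j\to i$ of $G$ contributes $i\to j$ (into in-copies, reversed) by~(\ref{eq:rev-dir}) and $i'\to j'$ (out of out-copies, same direction) by~(\ref{eq:dir}); and a bidirected edge $i\bi j$ contributes $i\to j'$ by~(\ref{eq:bi-edges}). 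The key observation is that a directed path in $G_{\mathrm{flow}}$ from some $s\in S$ to some $t'\in T'$ traverses a (possibly empty) segment among in-copies, crosses exactly one edge of type~(\ref{eq:node-to-copy}) or~(\ref{eq:bi-edges}) from an in-copy $i$ to an out-copy $j'$ (with $i=j$ or $i\bi j\in G$), and then traverses a segment among out-copies. Translating back, the in-copy segment read in reverse is a directed path in $G$ ending at $s$, and the out-copy segment is a directed path in $G$ starting from $j$; together with the top node $i$ (when $i=j$) or the bidirected edge $i\bi j$, this is precisely a trek from $s$ to $t$ in $G$, and conversely every trek from $s$ to $t$ arises this way. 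Under this correspondence the left-hand side $\Left(\pi)$ of a trek $\pi$ maps to the set of in-copies visited by the path and $\Right(\pi)$ maps to the set of out-copies visited.

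Next I would spell out the cut correspondence. A vertex cut in $G_{\mathrm{flow}}$ separating $S$ from $T'$ is a set $C\subset V_f$ meeting every directed $S$-to-$T'$ path; since all vertices have capacity $1$, its cost is $|C|$, and by Menger's theorem (the vertex-capacitated max-flow min-cut) the value of the maximum flow from $S$ to $T'$ equals the minimum such $|C|$. Write $C = L \sqcup R'$ where $L = C\cap\{1,\dots,n\}$ consists of in-copies and $R' = \{r' : r\in R\}$ with $R\subset\{1,\dots,n\}$ consists of out-copies. By the path-trek correspondence of the previous paragraph, $C$ hits every $S$-to-$T'$ path if and only if, for every trek $\pi$ from a vertex of $S$ to a vertex of $T$, either the in-copy portion of the associated path meets $L$ — i.e. $\Left(\pi)\cap L\neq\emptyset$ — or the out-copy portion meets $R'$ — i.e. $\Right(\pi)\cap R\neq\emptyset$. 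That is exactly the statement that $(L,R)$ t-separates $S$ and $T$. Moreover $|C| = |L| + |R|$, so minimum cuts correspond to t-separators $(L,R)$ minimizing $|L|+|R|$.

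Combining these pieces: the maximum flow from $s_1,\dots,s_k$ to $t_1',\dots,t_m'$ equals the minimum of $|L|+|R|$ over all pairs $(L,R)$ that t-separate $S$ and $T$. By Theorem~\ref{thm:t-sep}, this minimum equals the generic rank of $\Sigma_{S,T}$: indeed $\Sigma_{S,T}$ has generic rank $\le r$ iff some t-separator has size $\le r$, so the generic rank equals the smallest $r$ attainable, which is precisely $\min_{(L,R)}(|L|+|R|)$. Hence the max-flow equals the generic rank of $\Sigma_{S,T}$, which is the assertion.

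The main obstacle is verifying the path-trek correspondence cleanly, in particular checking that it behaves correctly on both forms of a trek (the one with a bidirected edge $v^L_0 \leftrightarrow v^R_0$, handled by edge type~(\ref{eq:bi-edges}), and the one with a top node $v^T$, handled by the self-edge type~(\ref{eq:node-to-copy})), and that walks with repeated vertices on the $G$ side correspond correctly to walks in $G_{\mathrm{flow}}$ — one must be slightly careful that ``$C$ hits every path'' is equivalent to ``$C$ hits every simple path,'' which holds because any walk contains a simple sub-path with the same endpoints. One should also note the mild subtlety that $S$ and $T$ may overlap, or $S\cap T'$-type degeneracies, but empty treks (vertices $v\in S\cap T$) correspond to the single-edge path $v\to v'$, which is covered by~(\ref{eq:node-to-copy}); and since $S$ sits among in-copies and $T'$ among out-copies, no source equals a target in $G_{\mathrm{flow}}$, so max-flow is well-defined. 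Given Theorem~\ref{thm:t-sep} and the Generalized Max-flow Min-cut Theorem, the remainder is the bookkeeping just outlined.
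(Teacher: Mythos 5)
Your proposal is correct and follows essentially the same route as the paper's proof: both reduce the statement to Theorem~\ref{thm:t-sep} by identifying minimum vertex cuts of $G_{\mathrm{flow}}$ separating $S$ from $T'$ with minimum t-separators $(L,R)$ and then invoking the generalized max-flow min-cut theorem. The only difference is one of detail — the paper asserts that $L\cup R'$ is the minimum cut after adjoining a super-source and super-sink, whereas you justify that identification explicitly via the path--trek correspondence, which is a welcome elaboration rather than a divergence.
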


\begin{proof}
Add vertices $u,v$, with infinite capacity, to the graph $G_{flow}$ along with edges, all with capacity 1, $u\to s_i$, for $1 \leq i \leq k$, and $t'_j\to v$, for $1 \leq j \leq m$. Let $L,R$ be such that they $t$-separate the sets $S,T$ and $|L| + |R|$ is minimal. By Theorem \ref{thm:t-sep}, $\Sigma_{S,T}$ has rank $|L|+|R|$ generically. Note that $L \cup R’$ gives the minimal size $s-t$ cut (of size $|L|+|R|$). By the (generalized) Max-flow Min-cut theorem the max-flow from $u$ to $v$ is $|L|+|R|$, and it is also the max flow from $s_1,\ldots,s_k$ to $t'_1,\ldots,t'_m$. Hence $\Sigma_{S,T}$ has generic rank equal to the found max-flow.
\end{proof}

Note that the maximum flow between vertex sets in a graph can be
computed in polynomial time. Indeed, in our case, the conditions of
Corollary \ref{cor:t-sep} can be checked in $O(|V|^2 \max\{m,k\})$
time \citep[page 725]{cormen2009}. As the following example shows,
Corollary \ref{cor:t-sep} can be used to find determinantal
constraints on $\Sigma$.   These constraints can then be leveraged to
identify edges in $G$.

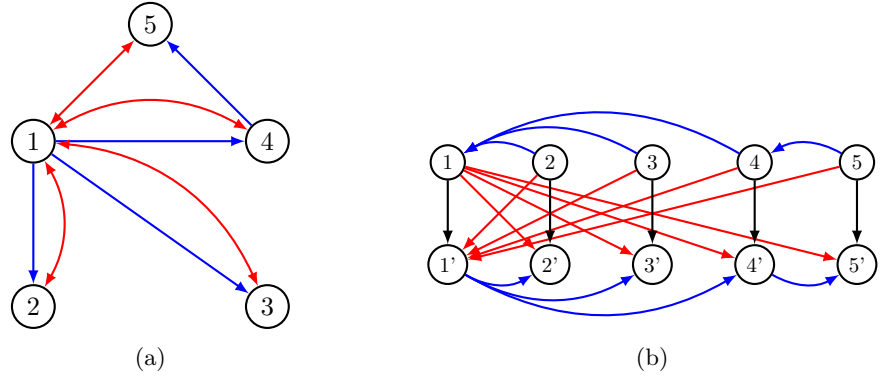
\begin{figure}[t]
  \begin{subfigure}[b]{0.4\textwidth}
    \centering
    \tikzset{
      every node/.style={circle, inner sep=1mm, minimum size=0.55cm, draw, thick, black, fill=white, text=black},
      every path/.style={thick}
    }
    \begin{tikzpicture}[align=center,node distance=2.2cm]
      \node [] (5) {5};
      \node [] (1) [below left of=5] {1};
      \node [] (4) [below right of=5]    {4};
      \node [] (2) [below of=1]    {2};
      \node [] (3) [below of=4]    {3};

      \draw[blue] [-latex] (1) edge (2);
      \draw[blue] [-latex] (1) edge (3);
      \draw[blue] [-latex] (1) edge (4);
      \draw[blue] [-latex] (4) edge (5);
      \draw[red] [latex-latex, bend left] (1) edge (2);
      \draw[red] [latex-latex, bend left] (1) edge (3);
      \draw[red] [latex-latex, bend left] (1) edge (4);
      \draw[red] [latex-latex] (1) edge (5);
    \end{tikzpicture}
    \caption{} \label{fig:htc-u}
  \end{subfigure}
  ~ \hspace{0.04\textwidth}
  \begin{subfigure}[b]{0.55\textwidth}
    \centering
    \tikzset{
      every node/.style={circle, inner sep=1mm, minimum size=0.55cm, draw, thick, black, fill=white, text=black, scale=0.8},
      every path/.style={thick}
    }
    \begin{tikzpicture}[align=center,node distance=1.7cm]
      
      \node [] (1) {1};
      \node [] (2) [right of=1] {2};
      \node [] (3) [right of=2] {3};
      \node [] (4) [right of=3] {4};
      \node [] (5) [right of=4] {5};
      
      \node [] (1') [below of=1] {1'};
      \node [] (2') [below of=2] {2'};
      \node [] (3') [below of=3] {3'};
      \node [] (4') [below of=4] {4'};
      \node [] (5') [below of=5] {5'};
      
      \draw[blue] [-latex, bend right] (2) edge (1);
      \draw[blue] [-latex, bend right] (3) edge (1);
      \draw[blue] [-latex, bend right] (4) edge (1);
      \draw[blue] [-latex, bend right] (5) edge (4);

      \draw[red] [-latex] (1) edge (2');
      \draw[red] [-latex] (1) edge (3');
      \draw[red] [-latex] (1) edge (4');
      \draw[red] [-latex] (1) edge (5');	
      \draw[red] [-latex] (2) edge (1');
      \draw[red] [-latex] (3) edge (1');
      \draw[red] [-latex] (4) edge (1');
      \draw[red] [-latex] (5) edge (1');
      
      \draw[black] [-latex] (1) edge (1');
      \draw[black] [-latex] (2) edge (2');
      \draw[black] [-latex] (3) edge (3');
      \draw[black] [-latex] (4) edge (4');
      \draw[black] [-latex] (5) edge (5');
      
      \draw[blue] [-latex, bend right] (1') edge (2');
      \draw[blue] [-latex, bend right] (1') edge (3');
      \draw[blue] [-latex, bend right] (1') edge (4');
      \draw[blue] [-latex, bend right] (4') edge (5');
    \end{tikzpicture}
    \caption{} \label{fig:htc-u-flow}
  \end{subfigure}
  
  \caption{(a) A graph $G$ that is generically identifiable but
    for which the HTC fails to identify any
    coefficients. (b) The corresponding flow graph
    $G_{\mathrm{flow}}$, black edges correspond to
    \eqref{eq:node-to-copy}, red edges to \eqref{eq:bi-edges}, and
    blue edges to \eqref{eq:rev-dir} and
    \eqref{eq:dir}.}\label{fig:htc-u-example} 
\end{figure}

\begin{exmp} \label{exmp:t-sep-1} Consider the mixed graph
  $G = (V,D,B)$ in Figure \ref{fig:htc-u}, which is taken from
  Fig.~3c in \cite{halftrek}.  The corresponding flow network
  $G_{\mathrm{flow}}$ is shown in Figure \ref{fig:htc-u-flow}.  From
  Gr\"obner basis computations, $G$ is known to be rationally
  identifiable but the half-trek criterion fails to certify that any
  edge of $G$ is generically identifiable. Let $S = \{1,2,4\}$ and
  $T = \{1,3,5\}$.  Corollary \ref{cor:t-sep} implies that
  $\Sigma_{S,T}$ has generically full rank as there is a flow of size
  3 from $S$ to $T'=\{1',3',5'\}$ in $G_{\mathrm{flow}}$, via the
  paths $1\to 3'$, $2\to 1'$, and $4\to 5'$. Now suppose that we
  remove the $4\to 5$ edge from $G$, call the resulting graph $\bar G$,
  and let $\bar\Sigma$ be the covariance matrix corresponding to
  $\bar G$. Then one may check that the max-flow from $S$ to $T'$ in
  $\bar G_{\mathrm{flow}}$ is $\leq 2$. Thus
  $|\bar\Sigma_{\{1,2,4\},\{1,3,5\}}| = 0$ where $|\cdot |$ denotes the
  determinant. Now note that $\lambda_{45}\sigma_{14}$ is the sum of
  all monomials given by treks from 1 to 5 that end in the edge
  $\lambda_{45}$.  Hence, $\sigma_{15} - \lambda_{45}\sigma_{14}$ is
  obtained by summing over all treks from 1 to 5 that do not end in
  the edge $4\to 5$.  But in our graph this is just the sum over treks
  from 1 to 5 that do not use the edge $4\to 5$ at all.  Therefore,
  $\bar\sigma_{15} = \sigma_{15} - \lambda_{45}\sigma_{14}$.  Similarly, it
  is straightforward to check that
  \begin{align}
    \bar\Sigma_{\{1,2,4\},\{1,3,5\}} = \left(\begin{matrix} \sigma_{11} & \sigma_{13} & \sigma_{15} -  \lambda_{45}\sigma_{14}\\
        \sigma_{21} & \sigma_{23} & \sigma_{25} -  \lambda_{45}\sigma_{24}\\
        \sigma_{41} & \sigma_{43} & \sigma_{45} -  \lambda_{45}\sigma_{44}
      \end{matrix} \right).\label{eq:equating-sub-mats}
  \end{align}
  By the multilinearity of the determinant, we deduce that
  \begin{align*}
    0 = |\bar\Sigma_{\{1,2,4\},\{1,3,5\}}| 
      &= 
        \left|\begin{matrix} \sigma_{11} & \sigma_{13} & \sigma_{15}\\
            \sigma_{21} & \sigma_{23} & \sigma_{25} \\
            \sigma_{41} & \sigma_{43} & \sigma_{45}
          \end{matrix} \right| -
                                        \lambda_{45}\left|\begin{matrix} \sigma_{11} & \sigma_{13} &  \sigma_{14}\\
                                            \sigma_{21} & \sigma_{23} & \sigma_{24}\\
                                            \sigma_{41} & \sigma_{43} &  \sigma_{44}
                                          \end{matrix} \right| \\
      &= |\Sigma_{\{1,2,4\},\{1,3,5\}}| - \lambda_{45}|\Sigma_{\{1,2,4\},\{1,3,4\}}|.
  \end{align*}
  Applying Corollary \ref{cor:t-sep} a final time, we recognize that
  $|\Sigma_{\{1,2,4\},\{1,3,4\}}|$ is generically non-zero and, thus,
  the equation 
  \[
    \lambda_{45} =
    \frac{|\Sigma_{\{1,2,4\},\{1,3,5\}}|}{|\Sigma_{\{1,2,4\},\{1,3,4\}}|}
  \] generically and rationally identifies $\lambda_{45}$.  In this
  case, the same strategy can be used to identify the edges
  $1\to 2$ and $1\to 3$ (but not $1\to 4$) in $G$.
\end{exmp}

In the above example, there is a correspondence between trek systems
in $G$ and trek systems in $\bar G$, the graph that has the edge to be
identified  removed.  This allowed us to leverage Corollary
\ref{cor:t-sep} directly to show that \eqref{eq:equating-sub-mats} has
determinant 0. Such a correspondence cannot always be obtained but
exists in the following case.

\begin{thm}\label{thm:t-sep-id-simple}
  Let $G = (V,D,B)$ be a mixed graph. Let $w_0\to v$ be an edge in
  $G$, and suppose that the edges $w_1\to v,\dots,w_\ell\to v \in G$
  are known to be generically (rationally) identifiable.  Let $\bar G$
  be the subgraph of $G$ with the edges
  $w_0 \rightarrow v,\dots,w_\ell\to v\in G$ removed. Suppose there
  are sets $S\subset V\setminus\{v\}$,
  $T\subset V \setminus \{v,w_0\}$ with $|S| = |T|+1 = k$ such that:
  \begin{enumerate}[(a)]
  \item $\des(v)\cap (S\cup T \cup\{v\}) = \emptyset$,
  \item the max-flow from $S$ to $T'\cup\{w_0'\}$ in $G_{\mathrm{flow}}$ equals $k$, and
  \item the max-flow from $S$ to $T'\cup\{v'\}$ in $\bar
    G_{\mathrm{flow}}$ is smaller than $k$.
  \end{enumerate}
  Then $w_0\to v$ is generically (rationally) identifiable by the equation
  \begin{align}\label{lambdaFormula}
    \lambda_{w_0v} = \frac{|\Sigma_{S, T \cup\{v\}}| - \sum_{i=1}^\ell\lambda_{w_iv}|\Sigma_{S, T \cup\{w_i\}}|}{| \Sigma_{S, T \cup\{w_0\}}|}.
  \end{align}
\end{thm}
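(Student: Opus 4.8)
The plan is to adapt the strategy of Example~\ref{exmp:t-sep-1}. Writing $\bar\Sigma$ for the covariance matrix of $\bar G$, regarded as a matrix of rational functions in the parameters $(\Lambda,\Omega)$ of $G$, I would express the submatrix $\bar\Sigma_{S,T\cup\{v\}}$ in terms of the entries of $\Sigma$ and the coefficients $\lambda_{w_0v},\dots,\lambda_{w_\ell v}$, expand its determinant by multilinearity, and then invoke Corollary~\ref{cor:t-sep} twice: once applied to $\bar G$ to force $|\bar\Sigma_{S,T\cup\{v\}}|$ to vanish identically, and once applied to $G$ to guarantee that the denominator $|\Sigma_{S,T\cup\{w_0\}}|$ is not identically zero.

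The combinatorial core is the following consequence of hypothesis~(a): for $s\in S$ and $y\in T\cup\{v\}$, no trek from $s$ to $y$ in $G$ passes through $v$ other than, when $y=v$, as its terminal vertex. Indeed, were $v$ to occur elsewhere, following the directed part of the trek past that occurrence would produce a directed path from $v$ to a vertex of $S\cup T\cup\{y\}\subseteq S\cup T\cup\{v\}$, contradicting $\des(v)\cap(S\cup T\cup\{v\})=\emptyset$; in particular~(a) forbids a directed cycle through $v$, so $v\notin\des(v)$ and no $w_i$ lies in $\des(v)$. It follows that a trek from $s\in S$ to $v$ uses at most one of the deleted edges $w_i\to v$, and if it uses $w_i\to v$ then this is its last edge; removing that edge gives a trek from $s$ to $w_i$, and this is a bijection onto all treks from $s$ to $w_i$. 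By the Trek Rule (Proposition~\ref{prop:trek-rule}) this bijection multiplies trek monomials by $\lambda_{w_iv}$, so
\begin{equation*}
  \bar\sigma_{sv} \;=\; \sigma_{sv} - \sum_{i=0}^{\ell}\lambda_{w_iv}\,\sigma_{sw_i}, \qquad s\in S,
\end{equation*}
while the same reasoning with target $t\in T$ (using $v\notin T$) shows that no trek from $s\in S$ to $t$ uses a deleted edge, so $\bar\sigma_{st}=\sigma_{st}$.

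Assembling these identities, $\bar\Sigma_{S,T\cup\{v\}}$ agrees with $\Sigma_{S,T\cup\{v\}}$ except in the column indexed by $v$, which is $\bigl(\sigma_{sv}-\sum_{i=0}^{\ell}\lambda_{w_iv}\sigma_{sw_i}\bigr)_{s\in S}$; expanding that column by multilinearity of the determinant yields
\begin{equation*}
  |\bar\Sigma_{S,T\cup\{v\}}| \;=\; |\Sigma_{S,T\cup\{v\}}| - \sum_{i=0}^{\ell}\lambda_{w_iv}\,|\Sigma_{S,T\cup\{w_i\}}|,
\end{equation*}
the columns being ordered consistently on both sides (and any term with $w_i\in T$, $i\ge1$, having a repeated column, hence vanishing). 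Now Corollary~\ref{cor:t-sep} applied to $\bar G$: by~(c) the max-flow from $S$ to $T'\cup\{v'\}$ in $\bar G_{\mathrm{flow}}$ is $<k=|S|=|T\cup\{v\}|$, so $\bar\Sigma_{S,T\cup\{v\}}$ has generic rank $<k$ and its $k\times k$ determinant is the zero rational function. And Corollary~\ref{cor:t-sep} applied to $G$: by~(b) the max-flow from $S$ to $T'\cup\{w_0'\}$ in $G_{\mathrm{flow}}$ equals $k=|S|=|T\cup\{w_0\}|$, so $\Sigma_{S,T\cup\{w_0\}}$ has generic rank $k$ and $|\Sigma_{S,T\cup\{w_0\}}|$ is a nonzero rational function. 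Setting the first display to $0$ and moving the $i=0$ term to the other side gives the identity $\lambda_{w_0v}\,|\Sigma_{S,T\cup\{w_0\}}| = |\Sigma_{S,T\cup\{v\}}| - \sum_{i=1}^{\ell}\lambda_{w_iv}\,|\Sigma_{S,T\cup\{w_i\}}|$, valid on all of $\Theta$.

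It remains to read off identifiability. For $(\Lambda,\Omega)$ outside the proper algebraic set on which generic identifiability of $w_1\to v,\dots,w_\ell\to v$ fails, any $(\tilde\Lambda,\tilde\Omega)\in\cF(\Lambda,\Omega)$ satisfies $\tilde\lambda_{w_iv}=\lambda_{w_iv}$ for $i\ge1$ and $\phi_G(\tilde\Lambda,\tilde\Omega)=\Sigma$; writing the above identity at both parameter points and subtracting, $(\tilde\lambda_{w_0v}-\lambda_{w_0v})|\Sigma_{S,T\cup\{w_0\}}|=0$, so $\tilde\lambda_{w_0v}=\lambda_{w_0v}$ wherever $|\Sigma_{S,T\cup\{w_0\}}|\ne0$ — a condition excluding only a further proper algebraic set. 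This proves generic identifiability of $w_0\to v$ by \eqref{lambdaFormula}; for rational identifiability, substitute the rational formulas for $\lambda_{w_1v},\dots,\lambda_{w_\ell v}$ into \eqref{lambdaFormula} to obtain a rational function of the entries of $\Sigma$ alone. I expect the first step above to be the main obstacle: carefully justifying, via hypothesis~(a), that deleting $w_0\to v,\dots,w_\ell\to v$ removes precisely the treks into $v$ terminating in those edges — so that the trek-monomial bookkeeping produces the clean relation between $\bar\Sigma$ and $\Sigma$ — while everything downstream is determinant algebra and two applications of Corollary~\ref{cor:t-sep}.
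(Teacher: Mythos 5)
Your proof is correct and follows essentially the same route as the paper's: use hypothesis (a) to show $\bar\sigma_{st}=\sigma_{st}$ and $\bar\sigma_{sv}=\sigma_{sv}-\sum_{i=0}^{\ell}\lambda_{w_iv}\sigma_{sw_i}$, expand $|\bar\Sigma_{S,T\cup\{v\}}|$ by multilinearity, and apply Corollary~\ref{cor:t-sep} once to $\bar G$ for the vanishing and once to $G$ for the nonvanishing denominator. Your write-up is in fact slightly more careful than the paper's in spelling out why a trek uses at most one deleted edge and in the final fiber argument, but these are refinements of the same argument, not a different one.
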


\begin{proof}
  Let $\Sigma$ and $\bar\Sigma$ be the covariance matrices
  corresponding to $G$ and $\bar G$, respectively. Since
  $\des(v)\cap (S\cup T \cup\{v\}) = \emptyset$, we have that
  $\sigma_{st} = \bar\sigma_{st}$ for all $s\in S$ and $t\in T$.  This holds because if a trek from $s$ to $t$ uses an edge $w_i\to v$ then
  either $s\in \{v\}\cup \des(v)$ or $t\in \{v\}\cup \des(v)$,
  violating our assumptions.

  Now let $s\in S$ and $0\leq i\leq \ell$. Suppose that $\pi$ is a
  trek from $s$ to $v$ that uses the edge $w_i\to v$. Then since
  $s\not\in \{v\}\cup \des(v)$ we must have that $w_i\to v$ is used
  only on the right-hand side of $\pi$.  With $v\not\in \des(v)$ it
  follows that $w_i\to v$ is the last edge used in the trek because
  $\pi$ may only use directed edges after using $w_i\to v$ and must
  end at $v$.  Hence, all treks from $s$ to $v$ which use $w_i\to v$
  must have this edge as their last edge on the right.  But
  $\sigma_{sw_i}\lambda_{w_iv}$ is obtained by summing over all treks
  from $s$ to $v$ which end in the edge $w_i\to v$ and, thus,
  $\sigma_{sv} - \sigma_{sw_i}\lambda_{w_iv}$ is the sum of the
  monomials for
  all treks from $s$ to $v$ that do not use the $w_i\to v$ edge at all.

  As the above argument holds for all $0\leq i\leq \ell$, it follows
  that
  $\bar\sigma_{sv} = \sigma_{sv} -
  \sum_{i=0}^k\sigma_{sw_i}\lambda_{w_iv}$. Since this is true for all
  $s\in S$ it follows, similarly as in Example \ref{exmp:t-sep-1},
  that
  \begin{align*}
    |\bar\Sigma_{S,T\cup \{v\}}| = |\Sigma_{S,T\cup\{v\}}| - \sum_{i=0}^k\lambda_{w_iv}|\Sigma_{S,T\cup\{w_i\}}|.
  \end{align*}
  Using assumption (c) and applying Corollary \ref{cor:t-sep}, we have
  $|\bar\Sigma_{S,T\cup \{v\}}| = 0$.  Similarly, by assumption (b),
  $|\Sigma_{S,T\cup\{w_0\}}|\not = 0$ generically. The desired result follows.
\end{proof}

\begin{rem}
  Theorem~\ref{thm:t-sep-id-simple} generalizes the ideas underlying
  instrumental variable methods such as those discussed in
  \cite{Brito02}.  Indeed, this prior work uses d-separation as
  opposed to t-separation.  D-separation characterizes conditional
  independence which in the present context corresponds to the
  vanishing of particular almost principal determinants of the
  covariance matrix.  In contrast,
  Theorem~\ref{thm:t-sep-id-simple} allows us to leverage arbitrary
  determinantal relations; compare \cite{sullivant2010}.  The graph
  in Figure \ref{fig:htc-u} is an example in which
  d-separation and traditional instrumental variable techniques cannot
  explain the rational identifiability of the coefficient for edge
  $4\to 5$.
\end{rem}

While assumption (a) in the above Theorem allows for the easy
application of Corollary \ref{cor:t-sep}, this assumption can be
relaxed by generalizing one direction of Corollary \ref{cor:t-sep}. We
state this generalization as the following lemma, which is concerned
with asymmetric treatment of edges that appear on the left versus
right-hand side of treks.  The lemma's proof is deferred to Appendix
\ref{app:proof-0-det-generalization}.

\begin{lem}\label{lem:0-det-generalization}
  Let $G=(V,D,B)$ be a mixed graph, and let $\Lambda=(\lambda_{uv})$ and $\Omega$ be
  the matrices of indeterminants corresponding to the directed and the
  bidirected part of $G$, respectively.  Let $D_L,D_R\subset D$
  and define $n\times n$ matrices $\Lambda^L$ and $\Lambda^R$ with
  \begin{align*}
    \Lambda^L_{uv} &= 
                   \begin{cases}
                     \lambda_{uv}  & \mbox{if } (u,v)\in D_L, \\
                     0 & \mbox{otherwise},
                   \end{cases} \text{ and }\\
    \Lambda^R_{uv} &= 
                   \begin{cases}
                     \lambda_{uv}  & \mbox{if } (u,v)\in D_R, \\
                     0 & \mbox{otherwise}.
                   \end{cases} 
  \end{align*}
  Define a network $G^*_{\mathrm{flow}} = (V^*,D^*)$ with vertex set
  $V^* = \{1,\dots,n\}\cup\{1',\dots,n'\}$, edge set $D^*$ containing 
  \begin{align}
    i\to j   & \ \text{ if } (j,i)\in D_L, \label{eq:rev-dir-2}\\
    i\to i'  & \ \text{ for all } i\in V, \label{eq:node-to-copy-2}\\
    i \to j' & \ \text{ if } (i,j) \in B, \label{eq:bi-edges-2}\\
    i'\to j' & \ \text{ if } (i, j)\in D_R, \text{ and} \label{eq:dir-2}
  \end{align}
  with all edges and vertices having capacity 1.  Let
  $\Gamma = (I-\Lambda^L)^{-T}\Omega(I-\Lambda^R)^{-1}$.  Then, for any
  $S,T\subset V$ with $|S|=|T|=k$, we have that $|\Gamma_{S,T}|=0$ if
  the max-flow from $S$ to $T'$ in $G^*_{\mathrm{flow}}$ is $<k$.
\end{lem}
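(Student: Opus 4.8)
The plan is to reduce the statement to an algebraic identity about generalized trek monomials, then invoke a max-flow/min-cut argument in the spirit of Corollary~\ref{cor:t-sep}. First I would make precise what $\Gamma_{vw}$ is as a formal power series: expanding $(I-\Lambda^L)^{-T} = \sum_{k\ge 0}(\Lambda^L)^{Tk}$ and $(I-\Lambda^R)^{-1} = \sum_{k\ge 0}(\Lambda^R)^k$, the entry $\Gamma_{vw}$ is a sum over ``asymmetric treks'': paths $v^L_l \leftarrow \cdots \leftarrow v^L_0 \bi v^R_0 \to \cdots \to v^R_r w$ (or the common-top-node variant) in which every \emph{left} edge is required to lie in $D_L$ and every \emph{right} edge in $D_R$. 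Each such path contributes the usual trek monomial. This is the exact analogue of the Trek Rule (Proposition~\ref{prop:trek-rule}), just with the two-sided edge restriction, and it follows from the same bookkeeping of matrix-product expansions.

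Next I would set up the determinant expansion. By the Cauchy–Binet-type argument already used in Example~\ref{exmp:t-sep-1} and in the proof of Theorem~\ref{thm:t-sep-id-simple}, $|\Gamma_{S,T}|$ is a signed sum over bijections $\rho\colon S\to T$ of products $\prod_{s\in S}\Gamma_{s\rho(s)}$, hence a signed sum over \emph{systems} of asymmetric treks with sources $S$ and targets $T$ (one trek per source, matched to a distinct target). I would then show that each such trek system which has \emph{no sided intersection} contributes a monomial that cannot be cancelled, exactly as in the Lindström–Gessel–Viennot style argument underlying Sullivant–Talaska–Draisma (Theorem~\ref{thm:t-sep}): systems with a sided intersection cancel in pairs via the standard tail-swapping involution, and a system with no sided intersection yields a monomial whose variables record the path, so it survives. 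Therefore $|\Gamma_{S,T}|$ is \emph{generically nonzero} iff there exists an asymmetric-trek system from $S$ to $T$ with no sided intersection. For the one direction we need, it suffices that: if $|\Gamma_{S,T}|\neq 0$ then such a no-sided-intersection system exists.

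The final step is the translation into the flow network $G^*_{\mathrm{flow}}$. An asymmetric trek from $s$ to $t$ with no collider corresponds to a directed path in $G^*_{\mathrm{flow}}$ from $s$ (in the unprimed copy) to $t'$ (in the primed copy): the left part $v^L_l\leftarrow\cdots\leftarrow v^L_0$ uses the reversed $D_L$-edges \eqref{eq:rev-dir-2}, the step across the top uses either the node-to-copy edge $i\to i'$ \eqref{eq:node-to-copy-2} (common-top-node case) or a bidirected edge $i\to j'$ \eqref{eq:bi-edges-2}, and the right part $v^R_0\to\cdots\to v^R_r$ uses the primed $D_R$-edges \eqref{eq:dir-2}; the vertex capacities of $1$ on unprimed and primed copies encode precisely the ``no sided intersection'' requirement across the whole system. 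Thus a no-sided-intersection asymmetric-trek system from $S$ to $T$ of size $k$ is exactly a vertex-disjoint (hence, by the unit capacities, a feasible integral) flow of value $k$ from $S$ to $T'$ in $G^*_{\mathrm{flow}}$. Consequently, if the max-flow from $S$ to $T'$ is $<k$, no such system of size $k$ exists, so by the previous paragraph $|\Gamma_{S,T}| = 0$, as claimed.

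I expect the main obstacle to be the case analysis handling repeated vertices and cycles in the ``no sided intersection implies surviving monomial'' step: unlike the acyclic case, a trek may revisit vertices, so one has to argue carefully — as in \cite{draisma2013} — that the cancellation involution is well-defined and that a surviving system can always be taken to be one in which each individual path is simple on each side, so that it genuinely corresponds to a vertex-capacity-respecting flow. Everything else (the asymmetric Trek Rule, the Cauchy–Binet expansion, the edge-by-edge correspondence with $G^*_{\mathrm{flow}}$) is a routine adaptation of the symmetric statements already cited, and since the lemma only asserts the ``one direction'' ($|\Gamma_{S,T}|=0$ when max-flow $<k$), we never need the converse construction of a flow from a nonvanishing determinant, which simplifies the bookkeeping.
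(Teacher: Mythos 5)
Your overall architecture is sound and your logical reduction is the right one: you only need the cancellation direction (every asymmetric-trek system has a sided intersection $\Rightarrow$ the signed sum vanishes), and your translation between no-sided-intersection systems and unit-capacity flows in $G^*_{\mathrm{flow}}$ is exactly the correspondence the paper uses. The gap is in the step you yourself flag as the ``main obstacle'': you assert that systems with a sided intersection ``cancel in pairs via the standard tail-swapping involution,'' but there is no standard such involution acting directly on systems of treks in a possibly cyclic mixed graph with non-diagonal $\Omega$. Treks can revisit vertices, a pair of treks can intersect on the left, on the right, or at a bidirected top edge carrying a factor $\omega_{uz}$, and in the cyclic case the ``first intersection point'' is not canonical under tail swapping, so the map you describe is neither obviously well defined nor obviously an involution. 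Since this is precisely the step that forces $|\Gamma_{S,T}|=0$, the proof is not complete as written; pointing to \cite{draisma2013} does not close it, because that reference does not construct a direct involution on trek systems either.

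The paper is structured specifically to avoid having to build such an involution. It first factors $\Gamma=(I-\Lambda^L)^{-T}\Omega(I-\Lambda^R)^{-1}$ by Cauchy--Binet through the intermediate index set $A$ of tops, so that the vanishing of $|\Gamma_{S,T}|$ reduces to the vanishing of determinants of $((I-\Lambda^L)^{-1})_{S,A}$ and $((I-\Lambda^R)^{-1})_{A,T}$, which are sums over \emph{directed path} systems rather than trek systems; the cyclic-graph cancellation for these is then delegated to the loop-erased-walk generalization of Gessel--Viennot--Lindstr\"om due to \cite{fomin2001} (Lemma \ref{lem:empty-bidirected-part}). Non-diagonal $\Omega$ is handled not by treating bidirected tops inside an involution but by the bidirected-subdivision trick: Lemma \ref{lem:bidirected-sub} shows the subdivision preserves the vanishing of $|\Gamma_{S,T}|$, and Lemma \ref{lem:bisect-same-flows} shows it preserves the max-flow. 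If you want to keep your more direct route, you would need to actually construct and verify the involution on asymmetric-trek systems (handling self-intersections, the left/right asymmetry $D_L$ versus $D_R$, and intersections at bidirected tops); otherwise the cleanest repair is to adopt the paper's three-step reduction, with which the rest of your argument (the asymmetric trek rule and the flow translation) combines correctly.
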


We may now state our more general result.

\begin{thm}\label{thm:t-sep-id-complex}
  Let $G = (V,D,B)$ be a mixed graph, $w_0\to v\in G$, and suppose that the edges $w_1\to v,\dots,w_\ell\to v\in G$ are known to be generically (rationally) identifiable. Let $G^*_{\mathrm{flow}}$ be $G_{\mathrm{flow}}$ with the edges $w_0'\to v',\dots,w_\ell'\to v'$ removed. Suppose there are sets $S\subset V$ and $T \subset V \setminus \{v,w_0\}$ such that $|S| = |T|+1 = k$ and
  \begin{enumerate}[(a)]
  \item $\des(v)\cap (T \cup\{v\}) = \emptyset$,
  \item the max-flow from $S$ to $T'\cup\{w_0'\}$ in $G_{\mathrm{flow}}$ equals $k$, and
  \item the max-flow from $S$ to $T'\cup \{v'\}$ in $G^*_{\mathrm{flow}}$ is $<k$.
  \end{enumerate}
  Then $w_0\to v$ is rationally identifiable by the equation
  \begin{align}
    \label{eq:div-formula}
    \lambda_{w_0v} = \frac{|\Sigma_{S, T \cup\{v\}}| - \sum_{i=1}^\ell\lambda_{w_iv}|\Sigma_{S, T \cup\{w_i\}}|}{| \Sigma_{S, T \cup\{w_0\}}|}.
  \end{align}
\end{thm}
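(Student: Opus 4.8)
The plan is to run the proof of Theorem~\ref{thm:t-sep-id-simple} almost verbatim, but to replace the step that passes to the subgraph $\bar G$ (with its covariance matrix $\bar\Sigma$) by an application of Lemma~\ref{lem:0-det-generalization}. A naive reduction to a subgraph no longer works here because assumption~(a) has been weakened so that $S$ may contain descendants of $v$; consequently an edge $w_i\to v$ can legitimately appear on the \emph{left}-hand side of a trek from some $s\in S$ to $v$, and deleting the edges $w_0\to v,\dots,w_\ell\to v$ outright would destroy such treks. Lemma~\ref{lem:0-det-generalization} is built precisely for this: it suppresses a set of directed edges only when they occur on the right-hand side of a trek, which is exactly the asymmetry needed, and this is mirrored in $G^*_{\mathrm{flow}}$ by deleting only the ``right'' copies $w_i'\to v'$ while retaining the edges $v\to w_i$.

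Concretely, I would take $D_L=D$ and $D_R=D\setminus\{(w_0,v),\dots,(w_\ell,v)\}$, form the matrices $\Lambda^L,\Lambda^R$ and $\Gamma=(I-\Lambda^L)^{-T}\Omega(I-\Lambda^R)^{-1}$ as in Lemma~\ref{lem:0-det-generalization}, and observe that the network it produces for this choice of $D_L,D_R$ is exactly the network $G^*_{\mathrm{flow}}$ of the theorem (compare~\eqref{eq:rev-dir-2}--\eqref{eq:dir-2} with~\eqref{eq:rev-dir}--\eqref{eq:dir}). Expanding the three factors of $\Gamma$ as power series and reading off a Trek-Rule expansion as in Proposition~\ref{prop:trek-rule} shows that, for all $s,t\in V$, the entry $\Gamma_{st}$ is the sum of the trek monomials of exactly those treks from $s$ to $t$ whose right-hand side traverses none of the edges $w_0\to v,\dots,w_\ell\to v$.

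The core of the argument is then two identities. First, for $s\in S$ and $t\in T$ one has $\Gamma_{st}=\sigma_{st}$: if a trek from $s$ to $t$ used some $w_i\to v$ on its right-hand side, then, since assumption~(a) forces $v\notin\des(v)$, i.e.\ $v$ lies on no directed cycle, that edge would have to be the trek's last edge, giving $t\in\{v\}\cup\des(v)$ and contradicting $\des(v)\cap(T\cup\{v\})=\emptyset$. Second, for $s\in S$ one has
\begin{align*}
  \Gamma_{sv}=\sigma_{sv}-\sum_{i=0}^{\ell}\lambda_{w_iv}\,\sigma_{sw_i}.
\end{align*}
This follows by partitioning the treks from $s$ to $v$ by their rightmost edge: again using $v\notin\des(v)$, a trek from $s$ to $v$ that traverses some $w_i\to v$ on its right-hand side must do so as its last edge (hence for a single $i$), deleting which gives a bijection with the treks from $s$ to $w_i$ that multiplies each monomial by $\lambda_{w_iv}$, so such treks contribute $\lambda_{w_iv}\sigma_{sw_i}$; the remaining treks are precisely those enumerated by $\Gamma_{sv}$, and in particular any trek using $w_i\to v$ on its left-hand side — which genuinely occurs when $s\in\des(v)$ — is retained. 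Multilinearity of the determinant in the $v$-column, exactly as in Example~\ref{exmp:t-sep-1} and the proof of Theorem~\ref{thm:t-sep-id-simple}, then gives
\begin{align*}
  |\Gamma_{S,T\cup\{v\}}|=|\Sigma_{S,T\cup\{v\}}|-\sum_{i=0}^{\ell}\lambda_{w_iv}\,|\Sigma_{S,T\cup\{w_i\}}|,
\end{align*}
a term with $w_i\in T$ being read as $0$.

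To conclude, apply Lemma~\ref{lem:0-det-generalization} to the $k\times k$ submatrix $\Gamma_{S,T\cup\{v\}}$: assumption~(c) says the max-flow from $S$ to $T'\cup\{v'\}$ in $G^*_{\mathrm{flow}}$ is $<k$, so $|\Gamma_{S,T\cup\{v\}}|=0$, and therefore $\lambda_{w_0v}\,|\Sigma_{S,T\cup\{w_0\}}|=|\Sigma_{S,T\cup\{v\}}|-\sum_{i=1}^{\ell}\lambda_{w_iv}\,|\Sigma_{S,T\cup\{w_i\}}|$. By assumption~(b) and Corollary~\ref{cor:t-sep} the denominator $|\Sigma_{S,T\cup\{w_0\}}|$ is generically nonzero, so dividing yields~\eqref{eq:div-formula}; substituting the rational identifying expressions for $\lambda_{w_1v},\dots,\lambda_{w_\ell v}$ exhibits $\lambda_{w_0v}$ as a rational function of the entries of $\Sigma$, which gives rational identifiability (or generic identifiability, if the $\lambda_{w_iv}$ are only assumed generically identifiable), exactly as in Theorem~\ref{thm:t-sep-id-simple}. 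I expect the main obstacle to be the trek-combinatorial bookkeeping behind the two identities for $\Gamma$ — in particular making the ``last edge'' argument fully rigorous in the possibly cyclic setting and verifying that the power-series expansion of $\Gamma$ sees exactly the treks whose right-hand side avoids the chosen edges — rather than the concluding linear algebra, which is routine.
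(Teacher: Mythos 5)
Your proposal is correct and follows essentially the same route as the paper's own proof: both reduce the claim to showing $|\Sigma_{S,T\cup\{v\}}|-\sum_{i=0}^{\ell}\lambda_{w_iv}|\Sigma_{S,T\cup\{w_i\}}|=0$ via multilinearity, identify the resulting matrix with a submatrix of $(I-\Lambda)^{-T}\Omega(I-\Lambda')^{-1}$ where $\Lambda'$ zeroes out the $(w_i,v)$ entries, and invoke Lemma~\ref{lem:0-det-generalization} with $\Lambda^L=\Lambda$, $\Lambda^R=\Lambda'$ together with assumptions (a)--(c). The trek-combinatorial bookkeeping you flag as the main obstacle (the ``last edge on the right'' argument using $v\notin\des(v)$) is handled in the paper exactly as you sketch it.
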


\begin{proof}
  By assumption (b) and Corollary \ref{cor:t-sep},
  $|\Sigma_{S, T \cup\{w_0\}}|$ is generically non-zero.  Therefore,
  equation~(\ref{eq:div-formula}) holds if
  \[
    |\Sigma_{S, T \cup\{v\}}| -
    \sum_{i=0}^\ell\lambda_{w_iv}|\Sigma_{S, T \cup\{w_i\}}| = 0.
  \]
  To show this we note that, by the multilinearity of the determinant,
  we have
  \begin{align*}
    |\Sigma_{S, T \cup\{v\}}| - \sum_{i=0}^\ell\lambda_{w_iv}|\Sigma_{S, T \cup\{w_i\}}| = 
    \left|\begin{matrix}
	\sigma_{s_1t_1} & \hdots & \sigma_{s_1t_{k-1}} & \sigma_{s_1v} - \sum_{i=0}^\ell\lambda_{w_iv}\sigma_{s_1w_i} \\
	\sigma_{s_2t_1} & \hdots & \sigma_{s_2t_{k-1}} & \sigma_{s_2v} - \sum_{i=0}^\ell\lambda_{w_iv}\sigma_{s_2w_i} \\
	\vdots & \ddots & \vdots &  \vdots \\
	\sigma_{s_kt_1} & \hdots & \sigma_{s_kt_{k-1}} & \sigma_{s_kv} - \sum_{i=0}^\ell\lambda_{w_iv}\sigma_{s_kw_i} \\
      \end{matrix} \right|.
  \end{align*}
  Write $\Gamma$ for the matrix that appears on the right-hand side of
  this equation.  
  
  Consider any two indices $i$ and $j$ with $1\leq i\leq k$ and
  $1\leq j\leq k-1$.  If a trek from $s_i$ to $t_j$ uses one of the
  edges $w_m\to v$, for $0\leq m\leq \ell$, on its right-hand side
  then $t_j\in\des(v)$, a contradiction since
  $\des(v)\cap T = \emptyset$ by assumption (a).  Similarly, since
  $v\not\in \des(v)$ the difference
  $\sigma_{s_iv} - \sum_{j=0}^\ell\lambda_{w_jv}\sigma_{s_iw_j}$ is
  obtained by summing the monomials for treks between $s_i$ and $v$
  which do not use any edge $w_j\to v$ on their right side. From this
  we may write
  \begin{align*}
    \Gamma = ((I - \Lambda)^{-T}\Omega(I-\Lambda')^{-1})_{S,T\cup\{v\}}
  \end{align*}
  where $\Lambda'$ equals $\Lambda$ but with its $(w_j,v)$, $0\leq
  j\leq \ell$, entries set to 0. The fact that $|\Gamma|=0$ under
  assumption (c) is the content of Lemma
  \ref{lem:0-det-generalization} (where we take $\Lambda^L=\Lambda$
  and $\Lambda^R= \Lambda'$). Given this lemma our desired result then follows.
\end{proof}

Clearly Theorem \ref{thm:t-sep-id-complex} can be applied whenever
Theorem \ref{thm:t-sep-id-simple} can.  Moreover, as the next example
shows, there are cases in which Theorem \ref{thm:t-sep-id-complex} can
be used while Theorem \ref{thm:t-sep-id-simple} cannot.

\begin{exmp}
  Let $G = (V,D,B)$ be the mixed graph from Figure
  \ref{fig:more-applicable-example}.  Take $S=\{3,5\}$ and $T=\{4\}$.
  Then Theorem \ref{thm:t-sep-id-complex} implies that $\lambda_{12}$
  is rationally identifiable. Theorem \ref{thm:t-sep-id-simple} cannot
  be applied in this case as $S\cap \des(2)\not=\emptyset$.
\end{exmp}

\begin{figure}[t]
  \centering
  \tikzset{
    every node/.style={circle, inner sep=1mm, minimum size=0.55cm, draw, thick, black, fill=white, text=black},
    every path/.style={thick}
  }
  \begin{tikzpicture}[align=center,node distance=1.5cm]
    \node [] (1) {1};
    \node [] (2) [right of=1] {2};
    \node [] (3) [right of=2]    {3};
    \node [] (4) [above of=3]    {4};
    \node [] (5) [right of=3]    {5};

    \draw[blue] [-latex] (1) edge (2);
    \draw[blue] [-latex] (2) edge (3);
    \draw[blue] [-latex] (4) edge (3);
    \draw[blue] [-latex] (3) edge (5);
    
    \draw[red] [latex-latex, bend left] (1) edge (2);
    \draw[red] [latex-latex, bend left] (3) edge (5);
    \draw[red] [latex-latex, bend right] (1) edge (5);
  \end{tikzpicture}
  \caption{A graph for which Theorem \ref{thm:t-sep-id-complex} can be used to to certify that the edge $\lambda_{12}$ is identifiable when Theorem \ref{thm:t-sep-id-simple} cannot.} \label{fig:more-applicable-example}
\end{figure}
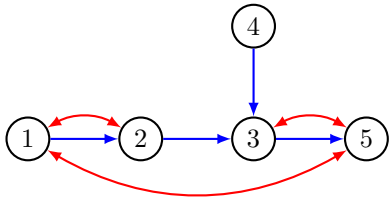

For a fixed choice of $S$ and $T$, the conditions (a)-(c) in Theorem
\ref{thm:t-sep-id-complex} can be verified in polynomial time.
Indeed, conditions (b) and (c) involve only max-flow
computations that take $O(|V|^3)$ time in general.  Condition (a) can
be checked by computing the descendants of $v$, which can be done with
any $O(|D|)$ graph traversal algorithm (e.g., depth first search, see \citet{cormen2009}), and
then computing the intersection between the descendants and
$T\cup\{v\}$ which can be done in $O(|V|\log|V|)$ time.

In order to apply Theorem \ref{thm:t-sep-id-complex} algorithmically,
however, we have to consider all possible subsets $S\subset V$,
$T\subset V\setminus\{v,w_0\}$ and check our condition for each pair.
Naively done this operation takes exponential time.  It remains an
interesting problem for further study to determine whether or not the
problem of finding suitable sets $S$ and $T$ is NP-hard.  We note that
a similar problem arises for instrumental variables/d-separation,
where \cite{vanderzander:2015} were able to give a polynomial time
algorithm for finding suitable sets in graphs that are acyclic.  Given
our results so far we will maintain polynomial time guarantees simply
by considering only subsets $S,T$ of bounded size $|S|,|T|\leq m$.

\section{Edgewise Generic Identifiability} \label{sec:edgewise-id}

Aiming to strengthen the HTC and leverage the results of Section
\ref{sec:trek-sep}, the following theorem establishes a sufficient
condition for the generic identifiability of any set of incoming
edges to a fixed node. While in the process of preparing this manuscript we
discovered the work of \cite{chen2016}; our following theorem can be
seen as a generalization of their Theorem 1.

\begin{thm}\label{thm:edgewise-id}
  Let $G = (V, D, B)$ be a mixed graph, $v\in V$ be any vertex, and $\pa(v)=\{w_1,\dots,w_m\}$. Let $S\subset \pa(v)$ be such that for every $s\in S=\{s_1,\dots,s_\ell\}$ we have that $s\to v$ is generically (rationally) identifiable. Let $E\subset \pa(v)\setminus S$ and suppose there exists $Y\subset V\setminus(\{v\}\cup \sib(v))$ with $|Y|=|E|$ such that the following conditions hold,
  \begin{enumerate}[(i)]
  \item There exists a half-trek system from $Y$ to $E$ with no sided intersection, 
  \item $\tr(y) \cap \pa(v) \subset S\cup E$ for all $y\in Y$, and
  \item For every $y\in Y$, if $z\in \htr(v)\cap \pa(y)$ then $z\to y$ is generically (rationally) identifiable.
  \end{enumerate}
  Then for each $e\in E$ we have that $e\to v$ is generically (rationally) identifiable.
\end{thm}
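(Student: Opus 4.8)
The plan is to identify the coefficients $\{\lambda_{ev}:e\in E\}$ simultaneously by assembling a single $|E|\times|E|$ linear system, with rows indexed by $Y$ and columns by $E$, whose coefficients are rational functions of $\Sigma$ and of the already-identified edge coefficients, and then showing that its coefficient matrix is generically invertible. The argument parallels the proof of Theorem~\ref{thm:htc-id}, with three modifications: we solve only for the subset $E\subset\pa(v)$ and move the (identified) $S$-columns to the right-hand side; condition~(ii) is used to discard the remaining parents $\pa(v)\setminus(S\cup E)$; and, for the rows $y\in\htr(v)$, condition~(iii) supplies exactly the identified incoming edges of $y$ that are needed to run the half-trek argument.

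Fix $y\in Y$ and set $\epsilon_v=X_v-\sum_{u\in\pa(v)}\lambda_{uv}X_u$, so that $\Cov(\epsilon_v,X_w)=[\Omega(I-\Lambda)^{-1}]_{vw}$ vanishes whenever there is no half-trek from $v$ to $w$. If $y\notin\htr(v)$ then, as $y\notin\{v\}\cup\sib(v)$, we get $\Cov(\epsilon_v,X_y)=0$, i.e.\ $\sigma_{yv}=\sum_{u\in\pa(v)}\lambda_{uv}\sigma_{yu}$; by condition~(ii) no trek joins $y$ to any $r\in\pa(v)\setminus(S\cup E)$, so $\sigma_{yr}=0$ and the equation collapses to $\sum_{e\in E}\sigma_{ye}\lambda_{ev}=\sigma_{yv}-\sum_{s\in S}\sigma_{ys}\lambda_{sv}$, whose right side is computable. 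If $y\in\htr(v)$, replace $X_y$ by the partial residual $\eta_y=X_y-\sum_{z\in\htr(v)\cap\pa(y)}\lambda_{zy}X_z$, which by condition~(iii) is a known linear combination of the observed variables, so that $\Cov(X_w,\eta_y)$ is computable for every $w$. Expanding one layer of $(I-\Lambda)^{-1}$ and using that every directed path from a vertex of $\{v\}\cup\sib(v)$ into $y$ has all of its non-initial vertices in $\htr(v)$, one checks that subtracting off precisely the $\htr(v)\cap\pa(y)$ edges annihilates the whole contribution of $\Cov(\epsilon_v,X_y)$, so $\Cov(\epsilon_v,\eta_y)=0$; together with condition~(ii) this gives $\sum_{e\in E}\Cov(X_e,\eta_y)\lambda_{ev}=\Cov(X_v,\eta_y)-\sum_{s\in S}\Cov(X_s,\eta_y)\lambda_{sv}$, again with computable right side.

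Let $M$ be the coefficient matrix, with $(y,e)$ entry $\sigma_{ye}$ when $y\notin\htr(v)$ and $\Cov(X_e,\eta_y)$ when $y\in\htr(v)$. In either case the Trek Rule shows $M_{ye}$ equals the sum of the monomials of all half-treks from $y$ to $e$ plus terms of strictly higher degree in the ``left-hand-side'' edge variables. Grading by that degree, the lowest-order part of $\det M$ is the determinant of the matrix of half-trek sums from $Y$ to $E$, and this is a nonzero polynomial exactly because condition~(i) furnishes a half-trek system from $Y$ to $E$ with no sided intersection: this is the half-trek analogue of Theorem~\ref{thm:t-sep} used in \cite{halftrek}, together with a Lindstr\"om--Gessel--Viennot style argument that the monomial contributed by that system is not cancelled. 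Hence $\det M\not\equiv0$, so $M$ is invertible outside a proper algebraic subset of $\Theta$, and on that complement the system has the unique solution given by Cramer's rule as rational functions of the entries of $\Sigma$ and of the identified $\lambda_{sv},\lambda_{zy}$. As the latter are themselves rationally (respectively, generically) identifiable, so is each $\lambda_{ev}$.

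The main obstacle is the interplay between the half-trek bookkeeping and the two kinds of rows. Showing $\Cov(\epsilon_v,\eta_y)=0$ for $y\in\htr(v)$ requires identifying exactly which incoming edges of $y$ may be omitted without leaving a residual term and checking that the only potential leftover, coming from an edge $v\to y$, does not arise under the hypotheses; and establishing $\det M\not\equiv0$ needs the half-trek strengthening of trek separation together with the non-cancellation argument for the distinguished monomial of the system in~(i). The remaining steps are routine bookkeeping around condition~(ii) and the standard passage between rational and generic identifiability.
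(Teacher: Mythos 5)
Your proposal follows essentially the same route as the paper's proof: your coefficient $\Cov(X_e,\eta_y)=\Sigma_{ye}-\sum_{z\in\htr(v)\cap\pa(y)}\lambda_{zy}\Sigma_{ze}$ is exactly the paper's matrix entry $A_{ij}$, your equations $\Cov(\epsilon_v,\eta_y)=0$ rearrange to the paper's linear system row by row, and the generic invertibility is established by the same mechanism (the half-trek system with no sided intersection together with a Gessel--Viennot--type non-cancellation argument, which is the paper's Lemma~\ref{lem:A-invertible}). One caveat: your assertion that the leftover term coming from an edge $v\to y$ ``does not arise under the hypotheses'' is not actually implied by conditions (i)--(iii) --- one can have $v\in\pa(y)\setminus\htr(v)$ consistently with all hypotheses (e.g.\ $e\to v$, $v\to y$, $y\bi e$ with $Y=\{y\}$, $E=\{e\}$), in which case $\Cov(\epsilon_v,\eta_y)=[\Omega(I-\Lambda)^{-1}]_{vv}\,\lambda_{vy}$ is generically nonzero and the stated equation acquires an extra term. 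However, the paper's own proof makes the identical implicit assumption when it claims that every trek from $y$ to $v$ not ending in an edge $u\to v$ with $u\in S\cup E$ must begin with $y\leftarrow h$ for $h\in\htr(v)\cap\pa(y)$, so this is a loose end shared with the published argument rather than a defect specific to your write-up.
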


\begin{proof}
  Let $(\Lambda, \Omega)$ be the matrices of indeterminants
  corresponding to $G$, and let
  $\Sigma = (I-\Lambda)^{-T}\Omega(I-\Lambda)^{-1}$ be the covariance
  matrix.  Recall our notation $\cT(v,w)$ for the set of treks from
  $v$ to $w$ in $G$.  By the trek rule (Prop. \ref{prop:trek-rule}),
  $\Sigma_{vw} = \sum_{\pi\in \cT(v,w)} \pi(\Lambda,\Omega)$ is the
  sum of monomials for treks from $v$ to $w$.

  Without loss of generality, let $E = \{w_1,\dots,w_k\}$ with $k\leq m$. Since $|E|=|Y|$ we enumerate $Y = \{y_1,\dots,y_k\}$. We denote the set of parents of $y_i$ that are half-trek reachable from $v$ as $H_i = \pa(y_i)\cap \htr(v) = \{h^i_1,\dots,h^i_{\ell_i}\}$.

  Our approach is to build a linear system of $k$ equations in $k$
  unknowns having a unique solution. Consider the set $\cT(y_1, v)$ of
  all treks between $y_1$ and $v$. Since
  $\tr(y_1) \cap \pa(v) \subset S\cup E$ and $y_1\bi v\not\in G$, all
  treks from $y_1$ to $v$ either end in a directed edge of the
  form $s_i \to v$ or $w_j\to v$, or must start in a directed edge of
  the form $y_1 \leftarrow h^1_i$. Now note that for any vertex
  $e\in E$,
  \begin{align*}
    \sum_{\pi\in \cT(y_1,e)} \pi(\Lambda,\Omega) - \sum_{i=1}^{\ell_1} \sum_{\pi\in \cT(h^1_i,e)} \pi(\Lambda,\Omega)\lambda_{h^1_iy_1} = \Sigma_{y_1e} - \sum_{i=1}^{\ell_1} \Sigma_{h^1_ie} \lambda_{h^1_iy_1}
  \end{align*}
  equals the sum of the monomials for all treks from $y_1$ to $e$ that
  do not start with a directed edge of the form
  $y_1 \leftarrow h^1_i$.  Arguing analogously when replacing $e$ with
  some $s\in S$, we find that the sum of all monomials for treks from
  $y_1$ to $v$ that do not start with an edge of the form
  $y_1\leftarrow h^1_i$ equals
  \begin{align*}
    \sum_{j=1}^\ell (\Sigma_{y_1s_j} - \sum_{i=1}^{\ell_1} \Sigma_{h^1_is_j} \lambda_{h^1_iy_1})\lambda_{s_jv} + \sum_{j=1}^k (\Sigma_{y_1w_j} - \sum_{i=1}^{\ell_1} \Sigma_{h^1_iw_j} \lambda_{h^1_iy_1})\lambda_{w_jv}.
  \end{align*}
  Now the sum over all treks between $y_1$ and $v$ that start with an
  edge of the form $y_1\leftarrow h^1_i$ is easily seen to be the
  quantity
  $\sum_{i=1}^{\ell_1}\Sigma_{vh^1_i}\lambda_{h^1_iy_1}$. Thus, 
  \begin{multline*}
    \Sigma_{y_1v} =
    \sum_{j=1}^\ell (\Sigma_{y_1s_j} - \sum_{i=1}^{\ell_1}
    \Sigma_{h^1_is_j} \lambda_{h^1_iy_1})\lambda_{s_jv} \\
    + \sum_{j=1}^k (\Sigma_{y_1w_j} - \sum_{i=1}^{\ell_1} \Sigma_{h^1_iw_j} \lambda_{h^1_iy_1})\lambda_{w_jv} + \sum_{i=1}^{\ell_1}\Sigma_{vh^1_i}\lambda_{h^1_iy_1}.
  \end{multline*}
  Rewriting this we have
  \begin{align*}
    \sum_{j=1}^k (\Sigma_{y_1w_j} - &\sum_{i=1}^{\ell_1} \Sigma_{h^1_iw_j} \lambda_{h^1_iy_1})\lambda_{w_jv} \\
                                    &= \Sigma_{y_1v} - \sum_{j=1}^\ell (\Sigma_{y_1s_j} - \sum_{i=1}^{\ell_1} \Sigma_{h^1_is_j} \lambda_{h^1_iy_1})\lambda_{s_jv} - \sum_{i=1}^{\ell_1}\Sigma_{vh^1_i}\lambda_{h^1_iy_1}.
  \end{align*}
  In the above equation, the only unknown parameters (that is, those
  not assumed to be generically identifiable), are the
  $\lambda_{w_jv}$. Hence we have exhibited one linear equation in the
  $k$ unknown parameters $\lambda_{w_jv}$. 

  Repeating the above argument for each of the $y_i$, we obtain $k$ linear
  equations in $k$ unknowns.  It remains to show that the system of
  equations is generically non-singular.  This amounts to showing
  generic invertibility for 
  the $k\times k$ matrix $A$ with entries
  \begin{align*}
    A_{ij} = \Sigma_{y_iw_j} - \sum_{k=1}^{\ell_i} \Sigma_{h^i_kw_j}\lambda_{h^i_k y_i}.
  \end{align*}
  The invertibility of $A$ follows from the existence of the
  half-trek system from $Y$ to $E$ with no sided intersection and
  Lemma \ref{lem:A-invertible} below.  We conclude that each $w_i\to v$ is
  generically (rationally) identifiable as claimed.
\end{proof}

The following lemma generalizes Lemma 2 from \cite{halftrek} and completes the proof of Theorem \ref{thm:edgewise-id}.

\begin{lem}\label{lem:A-invertible}
  Let $G = (V, D, B)$ be a mixed graph on $n$ nodes with associated covariance matrix $\Sigma$. Moreover, let $S=\{s_1,\dots,s_k\},\ T=\{t_1,\dots,t_k\}\subset V$. For every $1\leq i\leq k$ let $H_i=\{h^i_1,\dots,h^i_{\ell_i}\}\subset \pa(s_i)$. Suppose there exists a half-trek system from $S$ to $T$ with no sided intersection. Then the $k\times k$ matrix $A$ defined by
  \begin{align*}
    A_{ij} = \Sigma_{s_it_j} - \sum_{k=1}^{\ell_i}\Sigma_{h^i_kt_j}\lambda_{h^i_ks_i}
  \end{align*}
  is generically invertible.
\end{lem}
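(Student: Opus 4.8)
The plan is to mimic the proof of Lemma 2 in \cite{halftrek}, where the analogous statement is handled for the simpler matrix $\Sigma_{s_it_j}$ (i.e.\ the case $H_i=\emptyset$), and to package the correction terms $\sum_k \Sigma_{h^i_kt_j}\lambda_{h^i_ks_i}$ as a single ``modified trek sum''. The first step is to observe, via the trek rule (Prop.~\ref{prop:trek-rule}) and the same monomial-cancellation argument used repeatedly above, that
\[
  A_{ij} = \Sigma_{s_it_j} - \sum_{k=1}^{\ell_i}\Sigma_{h^i_kt_j}\lambda_{h^i_ks_i}
\]
is exactly the sum of trek monomials over treks from $s_i$ to $t_j$ that do \emph{not} begin with a directed edge of the form $s_i\leftarrow h^i_k$ — that is, treks which, read from the $s_i$ end, either start with a bidirected edge, or start with a directed edge pointing away from $s_i$, or start with a directed edge $s_i\leftarrow p$ for some $p\in\pa(s_i)\setminus H_i$. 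Call this set of treks $\mathcal{T}^{H_i}(s_i,t_j)$. Thus $A_{ij}=\sum_{\pi\in\mathcal{T}^{H_i}(s_i,t_j)}\pi(\Lambda,\Omega)$.

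The second step is the determinant expansion. Writing $\det A = \sum_{\rho\in\mathfrak{S}_k}\mathrm{sign}(\rho)\prod_i A_{i\rho(i)}$ and expanding each $A_{i\rho(i)}$ as a sum over treks, $\det A$ becomes a signed sum over tuples of treks $(\pi_1,\dots,\pi_k)$ with $\pi_i\in\mathcal{T}^{H_i}(s_i,t_{\rho(i)})$ ranging over all $\rho$. The standard argument (as in \cite{sullivant2010,halftrek}) is that trek-tuples with a sided intersection cancel in pairs: if two treks $\pi_i,\pi_j$ share a vertex on their left sides (or on their right sides), one performs the canonical ``swap at the first shared vertex'' to produce an involution that pairs this tuple with another tuple carrying the opposite sign of $\rho$ but the same monomial. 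Here one must check that the swap stays inside the allowed classes $\mathcal{T}^{H_i}$: swapping the \emph{tails} of the left-hand sides of $\pi_i$ and $\pi_j$ does not change which edge each trek starts with at its source, so the constraint ``does not start with $s_i\leftarrow h^i_k$'' is preserved by the involution; swapping right-hand tails is unconstrained. Hence all terms coming from trek-systems \emph{with} a sided intersection cancel, and
\[
  \det A = \sum_{\Pi}\,\pm\,\prod_{i}\pi_i(\Lambda,\Omega),
\]
the sum running over half-sided... more precisely over trek-systems $\Pi=\{\pi_1,\dots,\pi_k\}$ from $S$ to $T$ with no sided intersection, with each $\pi_i\in\mathcal{T}^{H_i}(s_i,t_{\cdot})$ and the sign being that of the induced source–target matching.

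The third step is to conclude that this sum is not the zero polynomial, and this is where the hypotheses enter. The hypothesized half-trek system $\Pi^0$ from $S$ to $T$ with no sided intersection has every $\pi^0_i$ a half-trek from $s_i$, i.e.\ $|\Left(\pi^0_i)|=1$, so $\pi^0_i$ begins either with a bidirected edge at $s_i$ or with a directed edge pointing away from $s_i$ — in either case it does \emph{not} begin with $s_i\leftarrow h^i_k$, so $\pi^0_i\in\mathcal{T}^{H_i}(s_i,\cdot)$ and $\Pi^0$ is one of the trek-systems contributing to $\det A$. The main obstacle — exactly as in \cite{halftrek} — is that distinct trek-systems could produce the same monomial with opposite signs and cancel. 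The resolution is the usual one: among all no-sided-intersection trek-systems contributing a given monomial $M$, one shows the induced source–target matching is forced (the monomial determines, for each variable's multiplicity, a multiset of edges, and the no-sided-intersection condition together with the decomposition into treks pins down the pairing), so all contributions to $M$ share a sign and cannot cancel. Since $\Pi^0$ contributes a genuine nonzero monomial, $\det A\not\equiv 0$ as a polynomial in the entries of $(\Lambda,\Omega)$, hence is nonzero off a proper algebraic subset of $\Theta$; that is, $A$ is generically invertible. I expect the sign-consistency/non-cancellation bookkeeping to be the only delicate point, and it should follow verbatim from the argument in \cite{halftrek} once the trek classes $\mathcal{T}^{H_i}$ are seen to be closed under the canonical swapping involution.
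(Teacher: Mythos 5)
Your first two steps track the right objects (the identity $A_{ij}=\sum_{\pi}\pi(\Lambda,\Omega)$ over treks from $s_i$ to $t_j$ not beginning with $s_i\leftarrow h^i_k$ is exactly how the paper starts), but the non-cancellation step contains a genuine gap. You claim that for \emph{every} monomial $M$, all no-sided-intersection trek systems producing $M$ induce the same source--target matching and hence the same sign. That is not ``the usual resolution,'' it is a much stronger statement than what is proved in \cite{halftrek}, and it is exactly the kind of claim that makes the hard direction of trek separation hard: distinct no-sided-intersection trek systems can in general share a monomial with different matchings. The device the paper actually uses is to choose the half-trek system $\Pi$ of \emph{minimum total length} among all no-sided-intersection half-trek systems from $S$ to $T$; Lemma~1 of \cite{halftrek} then guarantees that $\Pi$ is the \emph{unique} trek system (among all trek systems, intersecting or not) whose trek monomial equals $\Pi(\Lambda,\Omega)$, so that single monomial survives in $|A|$ with coefficient $\pm 1$. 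Your proposal never invokes minimality, and without it the argument that $\Pi^0$'s monomial is not cancelled does not go through.

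A secondary consequence: because uniqueness is established against \emph{all} trek systems appearing in the Leibniz expansion, the paper does not need your step two at all. The tail-swapping involution you sketch for cancelling sided-intersection systems is itself delicate for treks (treks may self-intersect, and one must verify the swap produces valid treks in the restricted classes with the same monomial and reversed sign); the published treatments avoid a direct involution on treks by factoring through $(I-\Lambda)^{-1}$ and Cauchy--Binet. So the fix is not to repair the involution but to replace your third step: take the minimal-total-length half-trek system, note it lies in each $\mathcal{T}_{ij}$ since half-treks never begin with an edge into their source, and cite Lemma~1 of \cite{halftrek} for uniqueness of its monomial.
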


The proof of this lemma is deferred to Appendix \ref{app:proof-A-invertible}. Note that if $S = \emptyset$ and we require $E = \pa(v)$, then Theorem \ref{thm:edgewise-id} reduces to Theorem \ref{thm:htc-id}, the usual half-trek identifiability theorem (actually Theorem \ref{thm:edgewise-id} would still be slightly stronger as it does not require all incoming edges to a node in $Y\cap \htr(v)$ to be identified).

The conditions of Theorem \ref{thm:edgewise-id} can be easily checked in polynomial time using max-flow computations, just as with the standard half-trek criterion. Unfortunately, in general, we do not know for which subset $E\subset \pa(v)\setminus S$ we should be checking the conditions of Theorem \ref{thm:edgewise-id}. This, in practice, means that we will have to check all subsets $E\subset \pa(v)\setminus S$. There are, of course, exponentially many such subsets in general. If we are in a setting where we may assume that all vertices have bounded in-degree, then checking all subsets requires only polynomial time. In the case that in-degrees are not bounded, we may also maintain polynomial time complexity by only considering subsets $E$ of sufficiently large or small size. We provide pseudocode for an algorithm to iteratively identify the coefficients of a mixed graph leveraging Theorem \ref{thm:edgewise-id} in Algorithm \ref{alg:edgewise-id}.

\begin{algorithm}[t]
  \caption{Edgewise identification algorithm.}
  \label{alg:edgewise-id}
  \begin{algorithmic}[1]
    \STATE\textbf{Input:} A mixed graph $G= (V,D,B)$ with $V = \{1,\dots,n\}$ and a set of edges, $solvedEdges$, known to be generically identifiable.
    \REPEAT
    \FOR{$v \gets 1,\dots,n$}
    \STATE $unsolved \gets \{w\in V: w\to v\in G $ and $w\to v\not\in solvedEdges\}$.
    \STATE $maybeAllowed\gets$ \\
    \hspace{4mm}$\{y\in V\setminus(\{v\}\cup \sib(v)) :  z\in \htr(v)\cap \pa(y) \implies z\to y\in solvedEdges\}$
    \FOR{$E\subset unsolved$}
    \STATE $allowed \gets \{y\in maybeAllowed: \tr(y) \cap unsolved \subset E\}$
    \STATE $exists\gets $ Using max-flow computations, does there exist a half-trek system from $allowed$ to $E$ of size $|E|$ with no sided intersection?
    \IF{$exists$ is true}
    \STATE $solvedEdges\gets solvedEdges\cup\{e\to v: e\in E\}$
    \STATE Break out of the current loop
    \ENDIF
    \ENDFOR
    \ENDFOR
    \UNTIL{No additional edges have been added to $solvedEdges$ on the most recent loop.}
    \STATE\textbf{Output:} $solvedEdges$, the set of edges found to be generically (rationally) identifiable. 
  \end{algorithmic}
\end{algorithm}

\section{Edgewise Generic Nonidentifiability} \label{sec:edgewise-non-iden}

In prior sections we have focused solely on sufficient conditions for demonstrating the generic identifiability of edges in a mixed graph. This, of course, begs the question of if there are any complementary necessary conditions. That is, if there exist conditions that, when failed, show that a given edge is generically many-to-one. To our knowledge, the following is the only known necessary condition for generic identifiability and considers all parameters of a mixed graph $G$ simultaneously.

\begin{thm}[Theorem 2 of \citet{halftrek}] \label{thm:htc-nonidentifiability}
  Suppose $G=(V,D,B)$ is a mixed graph in which every family $(Y_v:v\in V)$ of subsets of the vertex set $V$ either contains a set $Y_v$ that fails to satisfy the half-trek criterion with respect to $v$ or contains a pair of sets $(Y_v,Y_w)$ with $v\in Y_w$ and $w\in Y_v$. Then the parameterization $\phi_G$ is generically infinite-to-one.
\end{thm}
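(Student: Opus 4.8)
The plan is to establish the contrapositive flavor of the statement by a Jacobian/rank argument: we show that under the stated hypothesis the Jacobian of $\phi_G$ fails to have full column rank at generic points, which by standard results on the dimension of the image of a polynomial map forces $\phi_G$ to be generically infinite-to-one. First I would recall the parameter count: the domain $\Theta$ has dimension $|D| + |B|/2 + n$ (counting $\lambda_{vw}$ for $v\to w\in G$, the symmetric $\omega_{vw}$ for $v\bi w \in G$, and the diagonal $\omega_{vv}$), while the target lives in the $\binom{n+1}{2}$-dimensional space of symmetric matrices. Write the Jacobian of $\phi_G$ as a matrix whose rows are indexed by pairs $\{v,w\}$ and whose columns are indexed by the free parameters; to prove generic rank deficiency it suffices to exhibit, for \emph{every} ordering, a nonzero vector in the left-kernel — equivalently, a linear dependence among the gradients $\partial \Sigma / \partial \lambda_{vw}$ and $\partial\Sigma/\partial\omega_{vw}$ that holds identically. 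The half-trek criterion enters precisely because HTC-style systems of half-treks with no sided intersection are exactly what is needed (via the Lindström–Gessel–Viennot / trek-system expansion of minors, cf. Theorem \ref{thm:t-sep} and Lemma \ref{lem:A-invertible}) to certify that certain blocks of this Jacobian \emph{are} generically full rank; the hypothesis says we can never simultaneously certify full rank for all nodes without running into a "mutual inclusion" obstruction $v\in Y_w$, $w\in Y_v$.

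The key steps, in order, would be: (1) Set up the Jacobian of $\phi_G$ and, following \citet{halftrek}, reorganize it node-by-node so that the columns for edges into $v$ and the entries $\omega_{v\cdot}$ are grouped together; the relevant rows for node $v$ are the covariances $\sigma_{vu}$ for $u$ ranging over a set that can be taken to be $\{v\}\cup\mathrm{sib}(v)\cup$(a candidate $Y_v$). (2) Show that this $v$-block of the Jacobian is generically invertible \emph{iff} some $Y_v$ satisfies the half-trek criterion with respect to $v$ — this is the "local" half-trek computation, using the trek rule to identify the relevant submatrix with an $A$-type matrix from Lemma \ref{lem:A-invertible}, whose invertibility is equivalent to the existence of a half-trek system with no sided intersection. (3) Observe that to conclude full column rank of the \emph{whole} Jacobian one needs a consistent choice of blocks across all $v$; formalize this as choosing a family $(Y_v : v\in V)$ together with an acyclicity-type condition ruling out the configuration $v\in Y_w$ and $w\in Y_v$ (the rows/columns would then be double-counted or create a dependency, collapsing the rank). (4) Invoke the hypothesis: no such family exists, hence for every attempted block decomposition the Jacobian is rank-deficient, so $\phi_G$ cannot be generically finite-to-one; since its image is a constructible set and the generic fiber dimension equals $\dim\Theta - \mathrm{rank}(\mathrm{Jac})$, a deficient Jacobian means positive-dimensional generic fibers, i.e.\ $\phi_G$ is generically infinite-to-one.

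The main obstacle I expect is step (3): making precise why the two failure modes — "some $Y_v$ fails HTC" versus "some reciprocal pair $v\in Y_w$, $w\in Y_v$" — are jointly exhaustive of the ways the Jacobian can fail to be full rank. One has to argue that \emph{if} the Jacobian were generically full rank, then one could extract, by a combinatorial selection (essentially a matching or a Hall-type argument on which row-block "pays for" which parameter-block), a family $(Y_v)$ that both satisfies the local HTC at every $v$ and is free of reciprocal pairs; conversely any such family certifies full rank. This selection/matching argument, and the bookkeeping that a reciprocal pair genuinely forces a linear dependence rather than merely obstructing one particular selection, is the technical heart of the proof; the rest is the now-standard translation between generic Jacobian rank and generic finiteness of fibers together with the trek-system combinatorics already developed in Section \ref{sec:trek-sep}. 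Since this is Theorem 2 of \citet{halftrek}, I would ultimately cite their argument for the delicate matching step and only sketch the reduction here.
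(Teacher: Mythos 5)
The paper does not actually prove this statement: it is imported verbatim as Theorem 2 of \citet{halftrek}, accompanied only by the one-line remark that it ``operates by showing that, given its conditions, the Jacobian of the map $\phi_G$ fails to have full column rank and thus must have infinite-to-one fibers.'' Your Jacobian-rank outline is consistent with that mechanism and, like the paper, you ultimately defer the decisive exhaustiveness/matching step to \citet{halftrek}; the only point worth flagging is that the ``iff'' in your step (2) overstates what is needed --- the proof only requires the direction ``generically full-rank Jacobian $\Rightarrow$ a family $(Y_v)$ satisfying the HTC without reciprocal pairs exists,'' which you correctly identify as the technical heart.
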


This theorem operates by showing that, given its conditions, the Jacobian of the map $\phi_G$ fails to have full column rank and thus must have infinite-to-one fibers. Unfortunately this theorem does not give any indication regarding which edges are, in particular, generically infinite-to-one. The theorem below gives a simple condition which guarantees that a directed edge is generically infinite-to-one.

\begin{thm}\label{thm:edge-inf-to-one}
  Let $G=(V,D,B)$ be a mixed graph and let $v\to w\in G$. Suppose that
  for every $z\in V\setminus\{w\}$ we have either $z \bi w \in G$
  or $v$ is not half-trek reachable from $z$.  Let
  $\proj_{v\to w}$ be the projection
  $(\Lambda,\Omega)\mapsto \lambda_{vw}$ for $v\to w\in G$.  Then
  $\proj_{v\to w}(\cF(\Lambda,\Omega))$ is infinite for all
  $(\Lambda,\Omega)\in \Theta=\bR^D_{\mathrm{reg}}\times \mathit{PD}(B)$.
\end{thm}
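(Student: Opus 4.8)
The plan is to produce, for each $(\Lambda,\Omega)\in\Theta$, an explicit curve inside the fibre $\cF(\Lambda,\Omega)$ along which $\lambda_{vw}$ moves; a curve of this kind immediately makes $\proj_{v\to w}(\cF(\Lambda,\Omega))$ infinite. Write $M=(I-\Lambda)^{-1}$, $\Sigma=\phi_G(\Lambda,\Omega)$, and let $e_i$ denote the standard basis vectors. For $t$ in a neighbourhood of $0$ set $\Lambda(t)=\Lambda+t\,e_ve_w^{T}$, which keeps the support $D$ and changes only the entry $\lambda_{vw}$, to $\lambda_{vw}+t$. Put $N(t)=(I-\Lambda)^{-1}(I-\Lambda(t))=I-t\,(Me_v)e_w^{T}$ and $\Omega(t)=N(t)^{T}\Omega N(t)$. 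Since $N(t)(I-\Lambda(t))^{-1}=M$ whenever $I-\Lambda(t)$ is invertible, one computes $\phi_G(\Lambda(t),\Omega(t))=[N(t)(I-\Lambda(t))^{-1}]^{T}\Omega[N(t)(I-\Lambda(t))^{-1}]=M^{T}\Omega M=\Sigma$; moreover $I-\Lambda(t)$ is invertible and $\Omega(t)$ is positive definite (a congruence of $\Omega$ by the invertible matrix $N(t)$) for $t$ near $0$. So the whole statement reduces to checking that $\Omega(t)$ has support $B$, for then $(\Lambda(t),\Omega(t))\in\Theta$ lies in $\cF(\Lambda,\Omega)$ with $\proj_{v\to w}(\Lambda(t),\Omega(t))=\lambda_{vw}+t$.

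Writing $c=Me_v$ and $d=\Omega c$, a direct expansion gives
\[
  \Omega(t)=\Omega-t\,(e_wd^{T}+de_w^{T})+t^{2}(c^{T}d)\,e_we_w^{T},
\]
so $\Omega(t)$ differs from $\Omega$ only in row and column $w$, with off-diagonal entry $\Omega(t)_{wz}=\omega_{wz}-t\,d_z$ for $z\neq w$. Because $\omega_{wz}=0$ for $z\notin\{w\}\cup\sib(w)$, while the diagonal entry and the entries indexed by $\sib(w)$ are unconstrained, $\Omega(t)$ has support $B$ if and only if $d_z=0$ for every $z\in V\setminus(\{w\}\cup\sib(w))$. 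Here $d_z$ is exactly the $(z,v)$ entry of the matrix $\Omega(I-\Lambda)^{-1}$.

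The key claim is thus: $(\Omega(I-\Lambda)^{-1})_{zv}=0$ whenever $z\neq v$ and $v\notin\htr(z)$. This is a path-counting argument, in the spirit of the Trek Rule: $(\Omega(I-\Lambda)^{-1})_{zv}=\sum_u\omega_{zu}M_{uv}$, and a summand can be nonzero only when $\omega_{zu}\neq0$ — i.e. $u=z$ or $z\bi u\in G$ — and $M_{uv}\neq0$, which as a polynomial identity requires a directed path from $u$ to $v$. If $u=z$, that path has length $\geq 1$ (length $0$ is excluded since $z\neq v$), a non-empty directed half-trek from $z$ to $v$; if $z\bi u\in G$ with $u\neq z$, prepending the bidirected edge gives a non-empty half-trek from $z$ to $v$. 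Either way $v\in\htr(z)$, a contradiction. Finally, the hypothesis is invoked exactly here: for $z\in V\setminus(\{w\}\cup\sib(w))$ it supplies $v\notin\htr(z)$, while the instance $z=v$ of the hypothesis forces $v\bi w\in G$, so that $v$ itself does not lie in $V\setminus(\{w\}\cup\sib(w))$. Hence $d_z=0$ on the required index set and the construction goes through.

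The step I expect to need the most care is matching the "free" entries of $\Omega(t)$ to the combinatorial hypothesis. One must read "$v$ is half-trek reachable from $z$" as "$v\in\{z\}\cup\htr(z)$", so that the $z=v$ instance of the hypothesis really does deliver $v\bi w\in G$; without this, the entry $(\Omega(I-\Lambda)^{-1})_{vv}$, which contains the term $\omega_{vv}\neq 0$, would destroy the support of $\Omega(t)$ and the argument would collapse (as it should, e.g. for the plain edge $v\to w$ with no other structure). It is also worth recording that $N(t)$ and the identity $N(t)(I-\Lambda(t))^{-1}=M$ involve no denominators, so no separate treatment of the cyclic case is needed, and that "$M_{uv}=0$ in the absence of a directed path from $u$ to $v$" is a polynomial identity valid at every $(\Lambda,\Omega)\in\Theta$; this is precisely what upgrades the conclusion from "generically" to "for all $(\Lambda,\Omega)\in\Theta$".
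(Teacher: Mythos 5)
Your proof is correct and is essentially the paper's own argument in different clothing: your $\Omega(t)=N(t)^{T}\Omega N(t)$ is exactly the paper's $\Psi=(I-\Gamma)^{T}\Sigma(I-\Gamma)$ for $\Gamma=\Lambda+t\,e_ve_w^{T}$, and your key identity $(\Omega(I-\Lambda)^{-1})_{zv}=0$ coincides with the paper's trek-rule identity $\sigma_{vz}=\sum_{u\in\pa(z)}\sigma_{vu}\lambda_{uz}$, because $\Sigma(I-\Lambda)=(I-\Lambda)^{-T}\Omega$. A small merit of your write-up is that it makes explicit the reading of ``half-trek reachable'' under which the $z=v$ instance of the hypothesis forces $v\bi w\in G$ (so that the troublesome term $\omega_{vv}M_{vv}$ never needs to vanish) --- a point the paper's proof relies on implicitly when it applies its identity only to non-siblings $y$ of $w$.
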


\begin{proof}
  Let $(\Lambda,\Omega)\in\Theta$ and
  $\Sigma = \phi_G(\Lambda,\Omega) =
  (I-\Lambda)^{-T}\Omega(I-\Lambda)^{-1}$. We will show that for each
  matrix $\Gamma=(\gamma_{xy})\in \bR^D_{\mathrm{reg}}$ that agrees
  with $\Lambda$ in all but (possibly) the $(v,w)$ entry, we can find
  $\Psi\in \mathit{PD}(B)$ for which $\phi_G(\Gamma, \Psi) = \Sigma$.
  The claim then follows by noting that the choices for
  $\Gamma$ allow for infinitely many values of $\gamma_{vw}$.

  Let $\Gamma\in \bR^D_{\mathrm{reg}}$ be as above, and let
  $x\not=y\in V$ be such that $x\bi y \not\in G$.  Then
  \begin{align*}
    ((I-\Gamma)^T&\Sigma(I-\Gamma))_{xy} \\
    &= \sigma_{xy} - \sum_{z\in \pa(x)}\sigma_{yz}\gamma_{zx} - \sum_{z\in \pa(y)}\sigma_{xz}\gamma_{zy} + \sum_{z\in \pa(x)}\sum_{z'\in \pa(y)}\gamma_{zx}\gamma_{z'y}\sigma_{zz'}.
  \end{align*}
  Whenever $x,y\not=w$ then $\gamma_{zx}=\lambda_{zx}$ and $\gamma_{zy}=\lambda_{zy}$ in the above equation. Thus
  \begin{align*}
    0 = \Omega_{xy} = ((I-\Lambda)^T\Sigma(I-\Lambda))_{xy} = ((I-\Gamma)^T\Sigma(I-\Gamma))_{xy}.
  \end{align*}
  Next suppose, without loss of generality, that $x=w$ and $y\not=w$. Then, since $y$ is a non-sibling of $w$, we must have that $v$ is not half-trek reachable from $y$, and hence $\sigma_{vy} = \sum_{z\in \pa(y)}\sigma_{vz}\lambda_{zy}$.
  But then 
  \begin{align*}
    &((I-\Gamma)^T\Sigma(I-\Gamma))_{wy} \\
    &= \sigma_{wy} - \sum_{z\in \pa(w)}\sigma_{yz}\gamma_{zw} - \sum_{z\in \pa(y)}\sigma_{wz}\gamma_{zy} + \sum_{z\in \pa(w)}\sum_{z'\in \pa(y)}\gamma_{zw}\gamma_{z'y}\sigma_{zz'} \\
    &= -\sigma_{vy}\gamma_{vw} + \sum_{z'\in \pa(y)}\gamma_{vw}\lambda_{z'y}\sigma_{vz'} \\
    &\quad +\sigma_{wy} - \sum_{v\not=z\in \pa(w)}\sigma_{yz}\lambda_{zw} - \sum_{z\in \pa(y)}\sigma_{wz}\lambda_{zy} + \sum_{v\not=z\in \pa(w)}\sum_{z'\in \pa(y)}\lambda_{zw}\lambda_{z'y}\sigma_{zz'} \\
    &= -\gamma_{vw} (\sigma_{vy} - \sum_{z'\in \pa(y)}\lambda_{z'y}\sigma_{vz'}) \\
    &\quad +\sigma_{wy} - \sum_{v\not=z\in \pa(w)}\sigma_{yz}\lambda_{zw} - \sum_{z\in \pa(y)}\sigma_{wz}\lambda_{zy} + \sum_{v\not=z\in \pa(w)}\sum_{z'\in \pa(y)}\lambda_{zw}\lambda_{z'y}\sigma_{zz'}.
  \end{align*}
  Now since $\sigma_{vy} - \sum_{z'\in
    \pa(y)}\lambda_{z'y}\sigma_{vz'}=0$, we have that $0=-\gamma_{vw}
  (\sigma_{vy} - \sum_{z'\in \pa(y)}\lambda_{z'y}\sigma_{vz'}) =
  -\lambda_{vw} (\sigma_{vy} - \sum_{z'\in
    \pa(y)}\gamma_{z'y}\sigma_{vz'})$.   Therefore,
  \begin{align*}
    &((I-\Gamma)^T\Sigma(I-\Gamma))_{wy} \\
    &= -\lambda_{vw} (\sigma_{vy} - \sum_{z'\in \pa(y)}\lambda_{z'y}\sigma_{vz'}) \\
    &\quad +\sigma_{wy} - \sum_{v\not=z\in \pa(w)}\sigma_{yz}\lambda_{zw} - \sum_{z\in \pa(y)}\sigma_{wz}\lambda_{zy} + \sum_{v\not=z\in \pa(w)}\sum_{z'\in \pa(y)}\lambda_{zw}\lambda_{z'y}\sigma_{zz'} \\
    &=((I-\Lambda)^T\Sigma(I-\Lambda))_{wy} \\
    &=\Omega_{wy}=0.
  \end{align*}
  Let $\Psi = (I-\Gamma)^T\Sigma(I-\Gamma)$. We have just shown that
  $\Psi_{xy} = 0$ for every $x,y\in V$ such that $x\bi y\not\in G$. To
  see that $\Psi\in \mathit{PD}(B)$ it remains to show that $\Psi$ is
  positive definite. But this is obvious from its definition since
  $\Sigma$ is positive definite and $I-\Gamma$ is invertible.   We
  conclude that $\phi_G(\Gamma,\Psi) = \Sigma$ which proves the claim.
\end{proof}

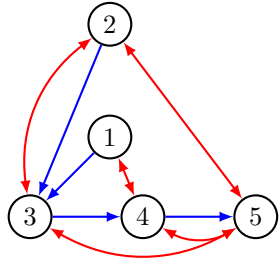
\begin{figure}[t]
  \centering
  \tikzset{
    every node/.style={circle, inner sep=1mm, minimum size=0.55cm, draw, thick, black, fill=white, text=black},
    every path/.style={thick}
  }
  \begin{tikzpicture}[align=center,node distance=1.5cm]
    \node [] (2) {2};
    \node [] (1) [below of=2] {1};
    \node [] (3) [below left of=1]    {3};
    \node [] (4) [right of=3]    {4};
    \node [] (5) [right of=4]    {5};

    \draw[blue] [-latex] (1) edge (3);
    \draw[blue] [-latex] (2) edge (3);
    \draw[blue] [-latex] (3) edge (4);
    \draw[blue] [-latex] (4) edge (5);
    
    \draw[red] [latex-latex] (1) edge (4);
    \draw[red] [latex-latex, bend right] (2) edge (3);
    \draw[red] [latex-latex] (2) edge (5);
    \draw[red] [latex-latex, bend right] (3) edge (5);
    \draw[red] [latex-latex, bend right] (4) edge (5);
  \end{tikzpicture}
  \caption{A graph in which all directed edges are identifiable except $2\to 3$. The $2\to 3$ edge can be shown to be infinite-to-one using Theorem \ref{thm:edge-inf-to-one}} \label{fig:infinite-to-one-edge}
\end{figure}

\begin{exmp}
  Let $G$ be the graph in Figure \ref{fig:infinite-to-one-edge}. Using the necessary condition of the HTC, Theorem \ref{thm:htc-nonidentifiability}, we find that $\phi_G$ is generically infinite-to-one. To identify which edges of $G$ are themselves infinite-to-one we use Theorem \ref{thm:edge-inf-to-one}. Doing so, one easily finds that the $2\to 3$ edge of $G$ is generically infinite-to-one. Indeed, using the edgewise identification techniques of Section \ref{sec:edgewise-id}, we see that all other directed edges of $G$ are generically identifiable so we have completely characterized which directed edges of $G$ are, and are not, generically identifiable.

  We stress, however, that Theorem \ref{thm:edge-inf-to-one} does not imply Theorem \ref{thm:htc-nonidentifiability}; that is, there are graphs $G$ for which Theorem \ref{thm:htc-nonidentifiability} shows $\phi_G$ is infinite-to-one but Theorem \ref{thm:edge-inf-to-one} cannot verify that any edges of $G$ are infinite-to-one.
\end{exmp}

\section{Computational Experiments} \label{sec:comp}

In this section we will provide some computational experiments that demonstrate the usefulness of our theorems in extending the applicability of the Half-Trek Criterion. All experiments below are carried out in the R programming language and use the R package SEMID available on CRAN, the Comprehensive R Archive Network \citep{R,semid}; our implementations of the TSID, EID, and EID$+$TSID algorithms described below are available on GitHub\footnote{Our implementations can be found at \url{https://github.com/Lucaweihs/SEMID}.} and will appear in the next release of the SEMID package on CRAN. We will be considering four different identification algorithms for checking generic identifiability:
\begin{enumerate}[(i)]
\item The standard half-trek criterion (HTC) algorithm.
\item The edgewise identification (EID) algorithm, displayed in Algorithm \ref{alg:edgewise-id}, where the input set of $solvedEdges$ is empty.
\item The trek-separation identification (TSID) algorithm. Similarly as for Algorithm
  \ref{alg:edgewise-id} this algorithm iteratively applies Theorem
  \ref{thm:t-sep-id-complex} until it fails to identify any additional
  edges.  (Since we are considering a small number of nodes there is
  no need to limit the size of sets $S$ and $T$ we are searching for
  in our computation.)
\item The EID$+$TSID algorithm. This algorithm alternates between the EID and TSID algorithms until it fails to identify any additional edges.
\end{enumerate}

We emphasize that when all of the directed edges, i.e., the matrix
$\Lambda$ is generically (rationally)  identifiable then we also have that
$\Omega=(I-\Lambda)^T\Sigma(I-\Lambda)$ is generically (rationally)
identifiable.  

In Table 1 of \cite{halftrek-supp}, the authors list all 112 acyclic non-isomorphic mixed graphs on 5 nodes which are generically identifiable but for which the half-trek criterion remains inconclusive even when using decomposition techniques. We run the EID, TSID, and EID$+$TSID algorithms upon the 112 inconclusive graphs and find that 23 can be declared generically identifiable by the EID algorithm, 0 by the TSID algorithm, and 98 by the EID$+$TSID algorithm. Thus it is only by using both the determinantal equations discovered by t-separation and the edgewise identification techniques that one sees a substantial increase in the number of graphs that can be declared generically identifiable.

We observe a similar trend to the above when allowing cyclic mixed
graphs. In Table 2 of \cite{halftrek-supp}, the authors list 75
randomly chosen, cyclic (i.e., containing a loop in the directed
part), mixed graphs that are known to be rationally identifiable but
cannot be certified so by the half-trek criterion. Of these 75 graphs,
4 are certified to be generically identifiable by the EID
algorithm, 0 by the TSID algorithm, and 34 by the EID$+$TSID
algorithm.

A listing of the 14 acyclic and 41 cyclic mixed graphs that could not be identified by the EID$+$TSID algorithm are listed as integer pairs $(d,b)\in\bN^2$ in Table \ref{table:eid-ts-inconclusive}. The algorithm to convert a pair $(d,b)$ in that table to a mixed graph $G$ on $n$ nodes is
\begin{enumerate}[1.]
\item For $v \gets 1,\dots,n$, for $w\gets 1,\dots,v-1,v+1,\dots,n$, do \\
  \phantom{\hspace{5mm}}Add edge $v\to w$ to $G$ if $d \text{ mod } 2=1$ \\
  \phantom{\hspace{5mm}}Replace $d$ with $\lfloor d/2 \rfloor$
\item For $v \gets 1,\dots,n-1$, for $w\gets v+1,\dots,n$, do \\
  \phantom{\hspace{5mm}}Add edge $v\bi w$ to $G$ if $b \text{ mod } 2=1$ \\
  \phantom{\hspace{5mm}}Replace $b$ with $\lfloor b/2 \rfloor$
\end{enumerate}
See Figure \ref{fig:eid-ts-u-example} for an example of a cyclic and acyclic graph that the EID$+$TSID algorithm fails to correctly certify as generically identifiable.

\begin{table}[t]
  \centering
  \begin{tabular}{c|cccc} \hline
    Acyclic & \multicolumn{3}{c}{Cyclic} \\
    \hline
    (4456, 113) & (345, 440) & (6629, 512) & (75321, 516) \\ 
    (360, 117) & (71329, 18) & (74536, 788) & (75398, 20) \\ 
    (6275, 172) & (81089, 0) & (5545, 96) & (70803, 896) \\ 
    (6307, 172) & (4714, 41) & (75112, 72) & (4457, 592) \\ 
    (6275, 188) & (70881, 80) & (74970, 4) & (74883, 522) \\ 
    (360, 369) & (74963, 512) & (4579, 384) & (350, 112) \\ 
    (4696, 401) & (74886, 268) & (70594, 65) & (74883, 2) \\ 
    (4936, 401) & (5058, 304) & (74921, 66) & (74950, 260) \\ 
    (4936, 402) & (70821, 513) & (70474, 640) & (74890, 38) \\ 
    (4680, 403) & (74915, 6) & (74922, 66) & (81076, 0) \\ 
    (840, 466) & (5267, 82) & (13160, 65) & (70851, 32) \\ 
    (5257, 658) & (76852, 128) & (4938, 448) & (1430, 120) \\ 
    (5257, 659) & (71075, 516) & (4730, 640) & (5251, 418) \\ 
    (4680, 914) & (4397, 897) & (70358, 1) &   \\
    \hline
  \end{tabular}

  \caption{Of the 112 acyclic and 75 cyclic mixed graphs on 5 nodes described in Tables 1 and 2 of \cite{halftrek-supp}, we display the 12 acyclic and 41 cyclic graphs which are known to be generically identifiable but for which the EID$+$TSID algorithm could not certify that all edges were generically identifiable. Each graph is encoded as a pair $(d,b)$, see text for details.} \label{table:eid-ts-inconclusive}
\end{table}

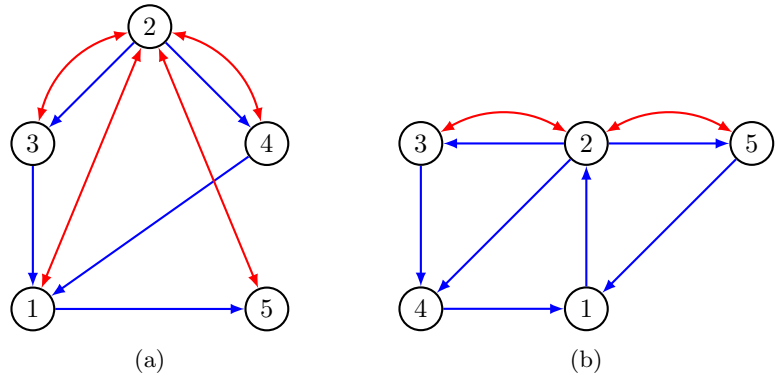
\begin{figure}[t]
  \begin{subfigure}[b]{0.4\textwidth}
    \centering
    \tikzset{
      every node/.style={circle, inner sep=1mm, minimum size=0.55cm, draw, thick, black, fill=white, text=black},
      every path/.style={thick}
    }
    \begin{tikzpicture}[align=center,node distance=2.2cm]
      \node [] (2) {2};
      \node [] (3) [below left of=2] {3};
      \node [] (4) [below right of=2]    {4};
      \node [] (1) [below of=3]    {1};
      \node [] (5) [below of=4]    {5};

      \draw[blue] [-latex] (2) edge (3);
      \draw[blue] [-latex] (2) edge (4);
      \draw[blue] [-latex] (3) edge (1);
      \draw[blue] [-latex] (4) edge (1);
      \draw[blue] [-latex] (1) edge (5);
      \draw[red] [latex-latex] (2) edge (1);
      \draw[red] [latex-latex, bend right] (2) edge (3);
      \draw[red] [latex-latex, bend left] (2) edge (4);
      \draw[red] [latex-latex] (2) edge (5);
    \end{tikzpicture}
    \caption{} \label{fig:eid-ts-u-acyc}
  \end{subfigure}
  ~ \hspace{0.04\textwidth}
  \begin{subfigure}[b]{0.4\textwidth}
    \centering
    \tikzset{
      every node/.style={circle, inner sep=1mm, minimum size=0.55cm, draw, thick, black, fill=white, text=black},
      every path/.style={thick}
    }
    \begin{tikzpicture}[align=center,node distance=2.2cm]
      \node [] (3) {3};
      \node [] (2) [right of=3] {2};
      \node [] (5) [right of=2]    {5};
      \node [] (4) [below of=3]    {4};
      \node [] (1) [below of=2]    {1};

      \draw[blue] [-latex] (1) edge (2);
      \draw[blue] [-latex] (2) edge (3);
      \draw[blue] [-latex] (2) edge (4);
      \draw[blue] [-latex] (2) edge (5);
      \draw[blue] [-latex] (3) edge (4);
      \draw[blue] [-latex] (4) edge (1);
      \draw[blue] [-latex] (5) edge (1);
      
      \draw[red] [latex-latex, bend right] (2) edge (3);
      \draw[red] [latex-latex, bend left] (2) edge (5);
    \end{tikzpicture}
    \caption{} \label{fig:eid-ts-u-cyc}
  \end{subfigure}
  
  \caption{Two graphs for which the EID$+$TSID algorithm is inconclusive. (a) is acyclic while (b) contains a cycle.}\label{fig:eid-ts-u-example}
\end{figure}

\section{Conclusion} \label{sec:conclusion}

By exploiting the trek-separation characterization of the vanishing of
subdeterminants of the covariance matrix $\Sigma$ corresponding to a
mixed graph $G$, we have shown that individual edge coefficients can
be generically identified by quotients of subdeterminants.  This
constitutes a generalization of instrumental variable techniques that
are derived from conditional independence.  We have also shown how
this information, in concert with a generalized half-trek criterion,
allows us to prove that substantially more graphs have all or some
subset of their parameters generically identifiable.

Our work on identification by ratios of determinants focuses on a
single edge coefficient.  However, it seems possible to give a
generalization that is in the spirit of the generalized instrumental
sets of \cite{Brito02}; see also \cite{vanderzander:2016}.  These
leverage several conditional independencies to find a linear equation
system that can be used to identify several edge coefficients
simultaneously, under specific assumptions on the interplay of the
conditional independencies and the edges to be identified.  We
illustrate the idea of how to do this using general determinants in
the following example.  However, a full exploration of this idea is
beyond the scope of this paper.  In particular, we are still lacking
mathematical tools that, in suitable generality, could be used to
certify that constructed linear equation systems have a unique
solution.

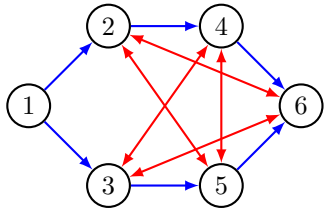
\begin{figure}[t]
  \centering
  \tikzset{
    every node/.style={circle, inner sep=1mm, minimum size=0.55cm, draw, thick, black, fill=white, text=black},
    every path/.style={thick}
  }
  \begin{tikzpicture}[align=center,node distance=1.5cm]
    \node [] (1) {1};
    \node [] (2) [above right of=1] {2};
    \node [] (3) [below right of=1]    {3};
    \node [] (4) [right of=2]    {4};
    \node [] (5) [right of=3]    {5};
    \node [] (6) [below right of=4]    {6};

    \draw[blue] [-latex] (1) edge (2);
    \draw[blue] [-latex] (1) edge (3);
    \draw[blue] [-latex] (2) edge (4);
    \draw[blue] [-latex] (3) edge (5);
    \draw[blue] [-latex] (4) edge (6);
    \draw[blue] [-latex] (5) edge (6);
    
    \draw[red] [latex-latex] (2) edge (6);
    \draw[red] [latex-latex] (3) edge (6);
    \draw[red] [latex-latex] (4) edge (5);
    \draw[red] [latex-latex] (2) edge (5);
    \draw[red] [latex-latex] (3) edge (4);
  \end{tikzpicture}
  \caption{A graph where the edges $4\to 6$ and $5\to 6$ can be simultaneously proven to be generically identifiable by solving a $2\times 2$ linear system of determinantal equations.} \label{fig:ts-multiple-edges-at-once}
\end{figure}

\begin{exmp}
  Let $G$ be the graph in Figure \ref{fig:ts-multiple-edges-at-once} with corresponding covariance matrix $\Sigma = (I-\Lambda)^{-T}\Omega(I-\Lambda)^{-1}$. Then, by similar considerations to those in Example \ref{exmp:t-sep-1}, one may show that
  \begin{align*}
    \left(\begin{matrix}
        |\Sigma_{\{3,5\},\{1,4\}}| & |\Sigma_{\{3,5\},\{1,5\}}| \\
        |\Sigma_{\{2,4\},\{1,4\}}| & |\Sigma_{\{2,4\},\{1,5\}}| \\
      \end{matrix}\right)
     \left(\begin{matrix}
         \lambda_{46} \\
         \lambda_{56}
       \end{matrix}\right) =
    \left(\begin{matrix}
         |\Sigma_{\{3,5\},\{1,6\}}| \\
         |\Sigma_{\{2,4\},\{1,6\}}|
       \end{matrix}\right).
  \end{align*}
  Using computer algebra we find that the $2\times 2$ matrix on the left hand side of the above equation has all non-zero polynomial entries, so that this is not equivalent to simply applying Theorem \ref{thm:t-sep-id-complex} for $4\to 6$ and $5\to 6$ separately, and has non-zero determinant. It follows that the above system is generically invertible and thus $\lambda_{46}$ and $\lambda_{56}$ are generically identifiable.
\end{exmp}

\section*{Acknowledgments}

This material is based on work started in June 2016 at the Mathematics
Research Communities (Week on Algebraic Statistics).  The work was
supported by the National Science Foundation under Grant Number DMS
1321794.

\appendix

\section{Proof of Lemma \ref{lem:0-det-generalization}} \label{app:proof-0-det-generalization}

We will require a known generalization of the Gessel-Viennot-Lindstr\"{o}m lemma which we now state.

\begin{defn}
  Let $G=(V,D)$ be a directed graph with vertices $V=\{1,\dots,n\}$ and corresponding matrix of indeterminants $\Lambda$. Let $\pi = v_1\to v_2\to\dots\to v_\ell$ be a directed path in $G$. Then define the \emph{loop erased} path $LE(\pi)$ corresponding to $\pi$ recursively as follows. If $\pi$ contains no loops then $\pi = LE(\pi)$. Otherwise there exist indices $1\leq i < j \leq \ell$ such that $v_i=v_j$. Then $LE(\pi) = LE(\pi')$ where $\pi' = v_1\to v_2 \to\dots\to v_i \to v_{j+1} \to \dots \to v_\ell$. It can be shown that $LE(\pi)$ is well defined (i.e. is independent of the ordering of the above recursion).
\end{defn}

\begin{lem}[Gessel-Viennot-Lindstr\"{o}m Generalization, Theorem 6.1 of \cite{fomin2001}]
  Let $G=(V,D)$ be a directed graph with vertices $V=\{1,\dots,n\}$ and corresponding matrix of indeterminants $\Lambda$. Define $\Psi = (I-\Lambda)^{-1}$ and for any directed path $\pi$ in $G$ define the path polynomial $\pi(\Lambda)  = \prod_{w\to v\in \pi}\lambda_{wv}$. Then for any $S=\{s_1,\dots,s_k\},T=\{t_1,\dots,t_k\}\subset V$ we have that
  \begin{align*}
    |\Psi_{S,T}| = \sum_{\tau \in P_n}\sign(\tau) \sum_{\substack{s_1\overset{\pi_1}{\longrightarrow} t_{\tau(1)},\dots,s_k\overset{\pi_k}{\longrightarrow} t_{\tau(k)} \\ i<j \implies \pi_j \cap LE(\pi_i) = \emptyset}} \pi_1(\Lambda)\hdots\pi_k(\Lambda),
  \end{align*}
  here the above inner sum is over all directed path systems $\Pi=\{\pi_1,\dots,\pi_k\}$ with $\pi_i$ going from $s_i$ to $t_{\tau(i)}$ for all $i$, where $\pi_j$ and $LE(\pi_i)$ share no vertices for $i<j$. Hence $|\Psi_{S,T}| = 0$ if and only if every system of directed paths from $S$ to $T$ has two paths which share a vertex.
\end{lem}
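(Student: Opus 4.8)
The plan is to adapt the classical Lindstr\"{o}m--Gessel--Viennot argument: expand the determinant $|\Psi_{S,T}|$ into a signed sum over permutations and, inside each term, over families of directed walks, and then cancel the ``intersecting'' contributions by a sign-reversing, monomial-preserving involution. The essential twist — and what makes the argument survive when $G$ has cycles, so that walks need not be simple — is that the involution must swap tails along the loop-erased paths $LE(\pi_i)$, not along the walks $\pi_i$ themselves. To set this up: since $\Psi=(I-\Lambda)^{-1}=\sum_{d\ge 0}\Lambda^d$, each entry $\Psi_{st}$ equals $\sum_{\pi}\pi(\Lambda)$, a formal power series summed over all directed walks $\pi$ from $s$ to $t$; hence, with $\tau$ ranging over permutations of $\{1,\dots,k\}$,
\[
  |\Psi_{S,T}| \;=\; \sum_{\tau}\sign(\tau)\prod_{i=1}^{k}\Psi_{s_i t_{\tau(i)}} \;=\; \sum_{\tau}\sign(\tau)\sum_{\Pi}\pi_1(\Lambda)\cdots\pi_k(\Lambda),
\]
where the inner sum ranges over \emph{all} families $\Pi=(\pi_1,\dots,\pi_k)$ of walks with $\pi_i$ running from $s_i$ to $t_{\tau(i)}$. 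Call $(\tau,\Pi)$ \emph{bad} if $\pi_j\cap LE(\pi_i)\neq\emptyset$ for some $i<j$; the families appearing on the right-hand side of the claimed identity are exactly the non-bad ones, so it suffices to exhibit an involution on the set of bad pairs that fixes the monomial $\pi_1(\Lambda)\cdots\pi_k(\Lambda)$ and reverses $\sign(\tau)$.

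To construct the involution, given a bad $(\tau,\Pi)$ I would first single out a canonical crossing by minimality: take $i$ least such that $LE(\pi_i)$ meets some $\pi_j$ with $j>i$, then $j>i$ least with $\pi_j\cap LE(\pi_i)\neq\emptyset$; writing $LE(\pi_i)=(u_0,\dots,u_m)$, let $u_p$ be the last vertex of this simple path lying on $\pi_j$. A preliminary structural observation I would prove first is that cutting $\pi_i$ at its \emph{last} occurrence of $u_p$ yields $\pi_i=\alpha_i\beta_i$ with $LE(\alpha_i)=(u_0,\dots,u_p)$ and $LE(\beta_i)=(u_p,\dots,u_m)$, where $\beta_i$ avoids $u_0,\dots,u_{p-1}$ entirely although $\alpha_i$ may still traverse $u_{p+1},\dots,u_m$ inside erased loops. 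Cutting $\pi_j$ at a suitably chosen occurrence of $u_p$ into $\pi_j=\alpha_j\beta_j$, I define the image of $(\tau,\Pi)$ by replacing $\pi_i$ with $\alpha_i\beta_j$, replacing $\pi_j$ with $\alpha_j\beta_i$, and composing $\tau$ with the transposition $(i\ j)$. Concatenation of walks multiplies their edge-weight products, so the monomial is unchanged, and the sign flips.

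The main obstacle is proving that this map is a well-defined involution of the set of bad pairs. One must verify that $\alpha_i\beta_j$ and $\alpha_j\beta_i$ are again legitimate walks (the junction vertex being $u_p$), that the new family is again bad with the \emph{same} distinguished data $(i,j,u_p)$, and hence that applying the map twice restores $(\tau,\Pi)$. This forces one to control how loop-erasure interacts with concatenation: e.g.\ one must show that the loops erased from $\alpha_i$ (which genuinely may revisit $u_{p+1},\dots,u_m$) and from $\beta_j$ cannot perturb $LE(\alpha_i\beta_j)$ enough to move the crossing vertex off $u_p$, and one must use the minimality of $i$ and $j$ to rule out the emergence of an earlier crossing in the new family. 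This bookkeeping is the genuinely delicate part of the statement; it is carried out as Theorem~6.1 of \cite{fomin2001}, and a full write-up would either reproduce or cite it.

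Finally, the ``if and only if'' clause is a corollary of the identity. Since $LE(\pi_i)\subseteq\pi_i$, any non-bad family $\Pi$ gives rise to the pairwise vertex-disjoint simple paths $LE(\pi_1),\dots,LE(\pi_k)$, which form a vertex-disjoint system from $S$ to $T$; so if no such system exists the index set of the sum is empty for every $\tau$ and $|\Psi_{S,T}|=0$. Conversely, a vertex-disjoint system $\Pi_0$ is itself non-bad (each $LE(\pi_i^0)=\pi_i^0$), and it contributes the squarefree monomial $\pi_1^0(\Lambda)\cdots\pi_k^0(\Lambda)$ with coefficient $\sign(\tau_0)=\pm1$; because the $\lambda_{wv}$ are distinct indeterminates and a vertex-disjoint union of simple paths is recovered uniquely from its edge set together with the information of which endpoints lie in $S$, no other non-bad family produces that monomial, so it does not cancel and $|\Psi_{S,T}|\neq0$.
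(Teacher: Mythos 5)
Your proposal is correct in substance, but there is essentially nothing in the paper to compare it against: the paper does not prove this lemma, it imports it wholesale as Theorem~6.1 of Fomin (2001) and uses it as a black box in the proof of Lemma~\ref{lem:0-det-generalization}. Your sketch faithfully reconstructs the strategy of that cited source --- expand $|\Psi_{S,T}|$ over permutations and families of walks using $\Psi=\sum_{d\ge 0}\Lambda^d$, then cancel the families with $\pi_j\cap LE(\pi_i)\neq\emptyset$ via a monomial-preserving, sign-reversing tail swap anchored at a canonically chosen crossing vertex of a loop-erased path --- and you correctly locate the entire difficulty in verifying that this map is a well-defined involution, i.e.\ in controlling how loop erasure interacts with concatenation so that the distinguished data $(i,j,u_p)$ is preserved. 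Since you defer exactly that verification to Fomin, your write-up sits at the same level of rigor as the paper's (a citation), which is acceptable here. The one part you argue completely on your own, the closing ``if and only if'' clause, is sound: $LE(\pi_j)\cap LE(\pi_i)\subseteq \pi_j\cap LE(\pi_i)$ shows every non-bad family yields a vertex-disjoint system, so absence of such a system empties the sum; conversely a vertex-disjoint system of simple paths contributes a squarefree monomial, and your observation that a family of walks from $S$ realizing that edge set (each edge once, every vertex of in- and out-degree at most one) is forced edge-by-edge to coincide with the original system shows the coefficient is $\pm 1$, so the (formal power series) determinant is nonzero. This is essentially the same non-cancellation device the paper uses elsewhere (Lemma~1 of the half-trek paper, invoked in Appendix~\ref{app:proof-A-invertible}), just with squarefreeness replacing minimal total length.
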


The remaining proof of Lemma \ref{lem:0-det-generalization} proceeds in several parts and closely follows similar results in \cite{sullivant2010} and \cite{draisma2013}. As such we will state several lemmas whose proofs require only small modifications of existing results (such as replacing the standard Gessel-Viennot-Lindstr\"{o}m Lemma with its generalization above). In such cases we will simply direct the reader to the corresponding proof and sketch the necessary modifications.

\begin{defn}
  Let $G=(V,D,B)$ be a mixed graph and let $U\subset D$. We say a trek $\pi$ in $G$ \emph{avoids $U$ on the left (right)} if the left (right) side of $\pi$ uses no edges from $U$. Similarly we say a system of treks $\Pi$ in $G$ \emph{avoids $U$ on the left (right)} if every trek $\pi\in \Pi$ avoids $U$ on the left (right). If $U_L,U_R\subset D$ we say that a \emph{trek (or trek system) avoids $(U_L,U_R)$} if it avoids $U_L$ on the left and $U_R$ on the right.
\end{defn}

\begin{lem}\label{lem:empty-bidirected-part}
  Let $G=(V,D,B)$ be a mixed graph and let $\Lambda,\Omega$ be $n\times n$ matrices of indeterminants corresponding to the directed and bidirected parts of $G$ respectively. Suppose that $B=\emptyset$ so that $\Omega$ is diagonal. Letting $D_L,D_R,\Lambda^L,\Lambda^R,\Gamma,$ and $G^*_{\mathrm{flow}}$ be as in Lemma \ref{lem:0-det-generalization} we have that for any $S,T\subset V$ with $|S|=|T|=k$, $|\Gamma_{S,T}|=0$ if and only if
  the max-flow from $S$ to $T'$ in $G^*_{\mathrm{flow}}$ is $<k$.
\end{lem}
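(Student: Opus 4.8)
The plan is to reduce the statement to two applications of the Gessel--Viennot--Lindstr\"{o}m generalization above, glued together by the Cauchy--Binet formula, and then to translate the resulting vertex-disjointness conditions into a flow in $G^*_{\mathrm{flow}}$. Set $\Psi^L=(I-\Lambda^L)^{-1}$ and $\Psi^R=(I-\Lambda^R)^{-1}$. Since $B=\emptyset$ makes $\Omega$ diagonal, multiplying out the product gives $\Gamma_{vw}=\sum_{z\in V}\Psi^L_{zv}\,\omega_{zz}\,\Psi^R_{zw}$, so that $\Gamma_{S,T}$ is the product of the $k\times n$ matrix with entries $\omega_{zz}\Psi^L_{zs}$ ($s\in S$, $z\in V$) and the $n\times k$ matrix with entries $\Psi^R_{zt}$ ($z\in V$, $t\in T$). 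Cauchy--Binet then yields
\[
|\Gamma_{S,T}|=\sum_{Z\subseteq V,\ |Z|=k}\Big(\prod_{z\in Z}\omega_{zz}\Big)\,|\Psi^L_{Z,S}|\,|\Psi^R_{Z,T}|.
\]
The monomials $\prod_{z\in Z}\omega_{zz}$ are pairwise distinct and the determinants $|\Psi^L_{Z,S}|$ and $|\Psi^R_{Z,T}|$ involve only the indeterminates in $\Lambda$; hence $|\Gamma_{S,T}|$ vanishes identically if and only if $|\Psi^L_{Z,S}|\,|\Psi^R_{Z,T}|=0$ for every $Z$ with $|Z|=k$, equivalently (working in the polynomial ring in the entries of $\Lambda$) if and only if for each such $Z$ either $|\Psi^L_{Z,S}|=0$ or $|\Psi^R_{Z,T}|=0$.

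Next I read off what these two determinants encode. The matrix $\Psi^L$ is exactly $(I-\Lambda)^{-1}$ for the directed graph $(V,D_L)$, and likewise $\Psi^R$ for $(V,D_R)$. Applying the Gessel--Viennot--Lindstr\"{o}m generalization, $|\Psi^L_{Z,S}|=0$ if and only if there is no system of pairwise vertex-disjoint directed paths in $(V,D_L)$ from $Z$ to $S$, and $|\Psi^R_{Z,T}|=0$ if and only if there is no such system in $(V,D_R)$ from $Z$ to $T$. Combining with the previous step, $|\Gamma_{S,T}|=0$ if and only if there is \emph{no} set $Z$ with $|Z|=k$ that simultaneously admits a vertex-disjoint directed $D_L$-path system $Z\to S$ and a vertex-disjoint directed $D_R$-path system $Z\to T$.

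It remains to match the existence of such a $Z$ with the max-flow from $S$ to $T'$ in $G^*_{\mathrm{flow}}$ equaling $k$; note this flow is always at most $k$, so ``$=k$'' and ``$<k$'' are complementary. By the integral max-flow min-cut theorem with the unit vertex capacities (equivalently Menger's theorem), a flow of value $k$ from $S$ to $T'$ amounts to $k$ pairwise vertex-disjoint directed paths in $G^*_{\mathrm{flow}}$, one leaving each vertex of $S$ and one entering each vertex of $T'$. Because $B=\emptyset$, the only edges of $G^*_{\mathrm{flow}}$ going from the unprimed to the primed copy are the edges $z\to z'$; thus each such path decomposes uniquely into a path inside the unprimed copy from its source $s\in S$ up to some vertex $z$, the crossing edge $z\to z'$, and a path inside the primed copy from $z'$ down to its target $t'$. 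Reading the unprimed piece backwards turns it, since $i\to j$ is an edge of $G^*_{\mathrm{flow}}$ exactly when $(j,i)\in D_L$, into a directed $D_L$-path from $z$ to $s$, while the primed piece is a directed $D_R$-path from $z$ to $t$. Vertex-disjointness of the $G^*_{\mathrm{flow}}$-paths is precisely vertex-disjointness of the unprimed pieces among themselves together with vertex-disjointness of the primed pieces among themselves; taking $Z$ to be the set of crossing vertices (which are distinct, lying on the disjoint $D_L$-paths) recovers exactly the pair of path systems from the previous step, and conversely any such $Z$ with its two path systems glues at the edges $z\to z'$ into $k$ vertex-disjoint paths carrying a flow of value $k$. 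Chaining the equivalences of the three steps proves the lemma.

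The Cauchy--Binet splitting and the flow translation are essentially bookkeeping; the place where the hypothesis $B=\emptyset$ is genuinely used --- and the only subtle point --- is the passage from ``the Cauchy--Binet sum vanishes'' to ``every summand vanishes'', which relies on the monomials $\prod_{z\in Z}\omega_{zz}$ being pairwise distinct, something available only because $\Omega$ is diagonal. (When $\Omega$ is not diagonal one cannot separate the top-node sets $Z$ in this way, which is why the subsequent lemmas have to route bidirected edges through the flow network instead.) A smaller point to keep in mind is that, since $G$ may have directed cycles, the proof really needs Fomin's loop-erased form of Gessel--Viennot--Lindstr\"{o}m rather than its classical acyclic version.
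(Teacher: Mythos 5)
Your proof is correct and follows essentially the same route as the paper's: the paper's appeal to Lemma~3.2 of \cite{sullivant2010} is exactly your Cauchy--Binet factorization over top-node sets $Z$ (using that $\Omega$ diagonal makes the monomials $\prod_{z\in Z}\omega_{zz}$ separate the summands), and its appeal to Prop.~3.5 is your Gessel--Viennot--Lindstr\"om step plus the gluing of a $D_L$-system $Z\to S$ and a $D_R$-system $Z\to T$ into vertex-disjoint paths of $G^*_{\mathrm{flow}}$. You have simply unpacked the cited arguments explicitly, and your closing remarks on where $B=\emptyset$ and the loop-erased version of GVL are needed are accurate.
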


\begin{proof}
  In the following, whenever we say ``As in x,'' we mean ``As in the proof of x in \citet{sullivant2010}.''
  
  As in Lemma 3.2, we have $|\Gamma_{S,T}|=0$ if and only if for every set $A\subset V$ with $|A|=K$ we have $|((I-\Lambda^L)^{-1})_{S,A}|=0$ or $|((I-\Lambda^R)^{-1})_{A,T}|=0$. As in Prop. 3.5, using the above result, and applying our version of the Gessel-Viennot-Lindstr\"{o}m Lemma, we have that $|\Gamma_{S,T}|=0$ if and only if every system of (simple) treks avoiding $(D\setminus D_L, D\setminus D_R)$ has sided intersection.

  Now noticing that $B=\emptyset$ simplifies the definition of $G^*_{\mathrm{flow}}$, we have as in Prop. 3.5 that the (simple) treks from $u$ to $v$ avoiding $(D\setminus D_L,D\setminus D_{R})$ in $G$ are in bijective correspondence with directed paths from $u$ to $v'$ in $G^*_{\mathrm{flow}}$. Finally the result follows by noticing that max-flow systems from $S$ to $T'$ in $G^*_{\mathrm{flow}}$ of size $k$ correspond to systems of treks from $S$ to $T$ avoiding $(D\setminus D_L, D\setminus D_R)$ with no-sided intersection (that is, if one exists so does the other). Combining the above if and only if statements, the result then follows.
\end{proof}

We have now proven our desired result in the case $B=\emptyset$, it remains to show that this implies the case $B\not=\emptyset$. To this end, we say that $\tilde{G}=(\tilde{V}, \tilde{D},\tilde{B})$ is the \emph{bidirected subdivision} of $G = (V,D,B)$ if it equals $G$ but where we have replaced every bidirected edge $i\bi j\in G$ with a vertex $v_{(i,j)}$ and two edges $v_{(i,j)}\to i$ and $v_{(i,j)}\to j$ (with associated parameters $\tilde{\omega}_{(i,j),(i,j)},\tilde{\lambda}_{(i,j)i},\tilde{\lambda}_{(i,j)j})$. Note that we have subdivided every bidirected edge into two directed edges which motivates the naming convention. Let $\tilde{D}_L$ and $\tilde{D}_R$ be equal to $D_L$ and $D_R$ respectively but where we have also added in the new edges $v_{(i,j)}\to i$ and $v_{(i,j)}\to j$ for every $i\bi j\in G$. Let $\tilde{\Lambda},\tilde{\Omega}$ be matrices of indeterminants corresponding to $\tilde{G}$ and let $\tilde{\Lambda}^L$, $\tilde{\Lambda}^R$ correspond to $\tilde{D}_L,\tilde{D}_R$ just as for $G$. We now have the following result that relates $G$ and $\tilde{G}$.

\begin{lem}\label{lem:bidirected-sub}
  Let $G$, $\tilde{G}$ be as in the prior paragraph. Then letting $\tilde{\Gamma} = (I-\tilde{\Lambda}^L)^{-T}\tilde{\Omega}(I-\tilde{\Lambda}^R)^{-1}$ we have that, for any polynomial $f$ taking, as input, an $n\times n$ matrix of variables, we have that $f(\Gamma)=0$ if and only if $f(\tilde{\Gamma}) = 0$. In particular, since the subdeterminant of a matrix is a polynomial in the entries of the matrix, we have that for any $S,T\subset V$ with $|S|=|T|=k$, $|\Gamma_{S,T}|=0$ if and only if $|\tilde{\Gamma}_{S,T}|=0$.
\end{lem}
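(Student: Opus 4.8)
The plan is to exhibit a substitution homomorphism $\rho$ that sends the indeterminants of $G$ to polynomials in the indeterminants of $\tilde{G}$ in such a way that $\rho$, applied entrywise, carries $\Gamma$ to the $V\times V$ submatrix $\tilde{\Gamma}_{V,V}$, and then to show that $\rho$ is injective on the ring in which all entries of $\Gamma$ live. Granting this, the lemma follows at once: for any polynomial $f$ of an $n\times n$ matrix of variables, $f(\tilde{\Gamma}_{V,V}) = f(\rho(\Gamma)) = \rho(f(\Gamma))$ because $\rho$ is a ring homomorphism, so $f(\Gamma)=0$ forces $f(\tilde{\Gamma}_{V,V})=0$, while $\rho(f(\Gamma))=0$ forces $f(\Gamma)=0$ by injectivity. (Throughout, $f(\tilde{\Gamma})$ is read as $f$ evaluated at $\tilde{\Gamma}_{V,V}$; the assertion about minors is the case where $f$ is the function $M\mapsto|M_{S,T}|$.)

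To construct $\rho$, recall that $\tilde{G}$ has no bidirected edges, so $\tilde{\Omega}$ is diagonal, with entries $\tilde{\omega}_{zz}$ for $z\in V$ and, for each bidirected edge $i\bi j$ of $G$, an entry $\tilde{\omega}_{ij} := \tilde{\omega}_{(i,j),(i,j)}$ carried by the subdivision vertex $v_{(i,j)}$. Set $\rho(\lambda_{uv}) = \tilde{\lambda}_{uv}$ for $u\to v\in D$; $\rho(\omega_{ij}) = \tilde{\omega}_{ij}\,\tilde{\lambda}_{(i,j)i}\,\tilde{\lambda}_{(i,j)j}$ for each bidirected edge $i\bi j$; and $\rho(\omega_{zz}) = \tilde{\omega}_{zz} + \sum_{j\in\sib(z)}\tilde{\omega}_{zj}\,\tilde{\lambda}_{(z,j)z}^{2}$ for $z\in V$. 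The correction term in the last formula, which accounts for the extra variance that the subdivision vertex $v_{(i,j)}$ injects into $i$ and into $j$, is the only nonobvious ingredient in the definition.

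Next I would verify $\rho(\Gamma) = \tilde{\Gamma}_{V,V}$ entrywise. Write $P^{L}_{zs} := ((I-\Lambda^{L})^{-1})_{zs}$, the formal sum of path monomials over directed walks (repeated vertices allowed) from $z$ to $s$ in $G$ using only edges of $D_{L}$, and define $P^{R}_{zt}$ analogously with $D_{R}$. Expanding the matrix products defining $\Gamma$ and symmetrising the off-diagonal terms of $\Omega$ gives, for $s,t\in V$,
\begin{align*}
  \Gamma_{st} = \sum_{z\in V}\omega_{zz}\,P^{L}_{zs}P^{R}_{zt} \;+\; \sum_{i\bi j}\omega_{ij}\bigl(P^{L}_{is}P^{R}_{jt} + P^{L}_{js}P^{R}_{it}\bigr),
\end{align*}
the second sum having one term per bidirected edge of $G$. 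Running the same computation in $\tilde{G}$ and using the key structural fact that each $v_{(i,j)}$ has no incoming edges — so it can occur in a directed walk only as the initial vertex, and hence a directed walk in $\tilde{G}$ between two vertices of $V$ never meets any $v_{(i,j)}$ — the diagonality of $\tilde{\Omega}$ yields
\begin{align*}
  \tilde{\Gamma}_{st} = \sum_{z\in V}\tilde{\omega}_{zz}\,\tilde{P}^{L}_{zs}\tilde{P}^{R}_{zt} \;+\; \sum_{i\bi j}\tilde{\omega}_{ij}\bigl(\tilde{\lambda}_{(i,j)i}\tilde{P}^{L}_{is} + \tilde{\lambda}_{(i,j)j}\tilde{P}^{L}_{js}\bigr)\bigl(\tilde{\lambda}_{(i,j)i}\tilde{P}^{R}_{it} + \tilde{\lambda}_{(i,j)j}\tilde{P}^{R}_{jt}\bigr),
\end{align*}
where $\tilde{P}^{L}_{zs},\tilde{P}^{R}_{zt}$ are the corresponding walk-sums in $\tilde{G}$; by the same observation these are exactly $\rho(P^{L}_{zs}),\rho(P^{R}_{zt})$. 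Expanding the two products on the right, the cross terms, multiplied by $\tilde{\omega}_{ij}$, reproduce $\rho(\omega_{ij}(P^{L}_{is}P^{R}_{jt}+P^{L}_{js}P^{R}_{it}))$, while the square terms, collected over all bidirected edges at each vertex $z$, upgrade $\tilde{\omega}_{zz}$ to $\rho(\omega_{zz})$ inside the $z$-summand. Comparing, $\tilde{\Gamma}_{st} = \rho(\Gamma_{st})$. This regrouping is the step I expect to be the main obstacle, since it is where one must be careful that the variance correction terms land in exactly the right places.

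Finally I would establish injectivity of $\rho$. The display for $\Gamma_{st}$ shows it is $R_{0}$-linear in the $\omega$-variables, where $R_{0} := \bR[[\lambda_{uv} : u\to v\in D]]$ is an integral domain; hence $\Gamma_{st}$, and therefore $f(\Gamma)$ for any polynomial $f$, lies in the polynomial ring $R := R_{0}[\,\omega_{zz}\ (z\in V),\ \omega_{ij}\ (i\bi j)\,]$. Identifying $R_{0}$ with $\bR[[\tilde{\lambda}_{uv} : u\to v\in D]]$ via $\rho$, the map $\rho$ restricts to an $R_{0}$-algebra homomorphism from $R$ into the polynomial ring $R_{0}[\,\tilde{\omega}_{zz},\,\tilde{\omega}_{ij},\,\tilde{\lambda}_{(i,j)i},\,\tilde{\lambda}_{(i,j)j}\,]$, and it is injective iff the images of its generators are algebraically independent over $R_{0}$. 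Composing with the $R_{0}$-algebra map that sets every $\tilde{\lambda}_{(i,j)i}$ and $\tilde{\lambda}_{(i,j)j}$ equal to $1$ carries those images to $\tilde{\omega}_{ij}$ and to $\tilde{\omega}_{zz} + \sum_{j\in\sib(z)}\tilde{\omega}_{zj}$, which is an invertible (block upper-triangular, unit-diagonal) linear substitution of the algebraically independent variables $\{\tilde{\omega}_{zz}\}\cup\{\tilde{\omega}_{ij}\}$, hence an isomorphism of polynomial rings. Therefore $\rho$ is injective on $R$, which completes the argument, the minor statement being this case with $f = (M\mapsto|M_{S,T}|)$.
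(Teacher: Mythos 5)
Your proof is correct and follows essentially the same route as the paper, which at this point simply defers to the first part of the proof of Proposition~2.5 in Draisma et al.\ (2013) --- an argument that is likewise a reparametrization of $\Omega$ in terms of the subdivision parameters. Your version has the merit of being self-contained: the explicit substitution $\rho(\omega_{zz})=\tilde{\omega}_{zz}+\sum_{j\in\sib(z)}\tilde{\omega}_{zj}\tilde{\lambda}_{(z,j)z}^{2}$, the observation that subdivision vertices have no incoming edges (so walks between original vertices never meet them), and the algebraic-independence argument for injectivity are all sound, and the formal power-series treatment correctly accommodates the asymmetric $\Lambda^{L}\neq\Lambda^{R}$ setting needed here, where the purely geometric ``same image of the parametrization'' argument for covariance matrices would not directly apply.
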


\begin{proof}
  This proof follows, essentially exactly, as the first part of the proof of Prop. 2.5 in \citet{draisma2013}.
\end{proof}

Now we show that the above subdivision trick produces a graph $\tilde{G}^*_{\mathrm{flow}}$ for which the max-flow between vertex sets is the same as for $G^*_{\mathrm{flow}}$.

\begin{lem} \label{lem:bisect-same-flows}
  Consider the graphs $G^*_{\mathrm{flow}}=(V^*,D^*)$ from the Lemma \ref{lem:0-det-generalization} statement and let $\tilde{G}^*_{\mathrm{flow}}=(\tilde{V}^*, \tilde{D}^*)$ be corresponding flow graph for the bidirected subdivision $\tilde{G}$ of $G$ . Let $S=\{s_1,\dots,s_k\},\ T=\{t_1,\dots,t_k\}\subset V$. Then the maximum flow from $S$ to $T'=\{t_1',\dots,t_k'\}$ in $G^*_{\mathrm{flow}}$ equals the maximum flow from $S$ to $T'$ in $\tilde{G}^*_{\mathrm{flow}}$.
\end{lem}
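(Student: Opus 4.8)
The plan is to prove that the two maximum flows equal a common value by transferring flows — presented as systems of vertex-disjoint $S$–$T'$ paths — back and forth between the two networks. First I would record the only structural difference between $G^*_{\mathrm{flow}}$ and $\tilde G^*_{\mathrm{flow}}$: for each bidirected edge $\{i,j\}$ of $G$ the two unit-capacity arcs $i\to j'$ and $j\to i'$ of $G^*_{\mathrm{flow}}$ are replaced in $\tilde G^*_{\mathrm{flow}}$ by a small \emph{gadget} consisting of the unit-capacity vertices $v_{(i,j)},v_{(i,j)}'$ together with the arcs $i\to v_{(i,j)}$, $j\to v_{(i,j)}$, $v_{(i,j)}\to v_{(i,j)}'$, $v_{(i,j)}'\to i'$, $v_{(i,j)}'\to j'$; every other vertex and arc is common to the two networks. (Equivalently, $\tilde G^*_{\mathrm{flow}}$ is what one gets from the network in which each bidirected edge is a single unit-capacity vertex $u_{ij}$ with in-arcs from $i,j$ and out-arcs to $i',j'$ by subdividing $u_{ij}$ along the arc $v_{(i,j)}\to v_{(i,j)}'$, and subdividing a unit vertex does not change max-flow.)

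For the inequality $\mathrm{maxflow}(G^*_{\mathrm{flow}})\ge\mathrm{maxflow}(\tilde G^*_{\mathrm{flow}})$ I would take an integral maximum flow in $\tilde G^*_{\mathrm{flow}}$, discard cycles, and write it as a family of vertex-disjoint $S$–$T'$ paths. Each such path traverses any gadget it meets as $x\to v_{(i,j)}\to v_{(i,j)}'\to y'$ with $x,y\in\{i,j\}$; I collapse this segment to the single arc $x\to y'$, which is present in $G^*_{\mathrm{flow}}$ (it is the bidirected arc when $x\ne y$, and the node-to-copy arc $x\to x'$ when $x=y$). The collapsed paths use exactly the same original vertices, hence remain vertex-disjoint, and they respect all capacities: each gadget — and so each arc it produces — is used by at most one path because $v_{(i,j)}$ has capacity $1$, while no collapsed arc $x\to y'$ can be produced a second way since its tail $x$ already has capacity $1$.

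For the reverse inequality I would start from an integral maximum flow in $G^*_{\mathrm{flow}}$, again a family of vertex-disjoint paths, and first \emph{uncross} it so that for no bidirected edge $\{i,j\}$ are both arcs $i\to j'$ and $j\to i'$ used. If a path $P_1$ reaches $i$ along a sub-path $A$, uses $i\to j'$, and continues to its target along $B$, while $P_2$ reaches $j$ along $C$, uses $j\to i'$, and continues along $D$, I reroute them to the path "$A$, then $i\to i'$, then $D$" and the path "$C$, then $j\to j'$, then $B$". Here I would verify that the arcs $i\to i'$ and $j\to j'$ are currently unused (their tails lie on the capacity-$1$ paths $P_1,P_2$), that $B$ avoids $i'$ and $D$ avoids $j'$ (these vertices already lie on $P_2$ and $P_1$ respectively, which are vertex-disjoint from one another), so that the two rerouted paths are simple and mutually vertex-disjoint and collide with no other path of the flow. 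This operation leaves the flow value unchanged and strictly decreases the number of bidirected edges whose two arcs are both in use, without creating new ones (the swapped-in arcs $i\to i'$, $j\to j'$ belong to no bidirected edge); so after finitely many steps we reach a maximum flow with no such "crossed" edge, and each used arc $i\to j'$ (or $j\to i'$) may then be expanded back into the corresponding gadget path $i\to v_{(i,j)}\to v_{(i,j)}'\to j'$, which is legal since each $v_{(i,j)}$ is now used at most once. This produces a flow of the same value in $\tilde G^*_{\mathrm{flow}}$, and the two inequalities together give the lemma.

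The capacity bookkeeping in the collapse and expand steps is routine; the one genuinely delicate point — which I expect to be the main obstacle — is checking that the uncrossing reroute is legitimate, i.e.\ that the two new paths are simple, are disjoint from each other and from all remaining paths, and that the two swapped-in arcs were genuinely free. All of this rests on the fact that every vertex of the network has capacity $1$, together with the observation that vertex-disjointness of $P_1$ and $P_2$ forces the tail segments $B$ and $D$ to avoid $i'$ and $j'$ respectively.
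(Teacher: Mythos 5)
Your proof is correct and follows essentially the same route as the paper's: both directions transfer a unit-capacity integral flow between the two networks by collapsing/expanding the gadget attached to each bidirected edge, and both handle the problematic case where $i\to j'$ and $j\to i'$ carry flow simultaneously by swapping in the node-to-copy arcs $i\to i'$ and $j\to j'$. Your path-based uncrossing is in fact justified more carefully than the paper's edge-level reassignment; the only point you leave implicit is that the two crossed arcs must lie on distinct paths, which holds because no simple path in these networks can return from the primed vertices to the unprimed ones.
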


\begin{proof}
  Recall that a flow system on a graph is an assignment of flow to the edges and vertices of the graph satisfying the usual flow constraints. Also recall that, for graphs with integral capacities, there always exists a max-flow system between subsets of nodes for which all flow assignments upon edges and vertices take values in $\bN$. We will show that any (integral valued) max-flow system from $S$ to $T'$ in $\tilde{G}^*_{\mathrm{flow}}$ corresponds to a unique flow system in $G^*_{\mathrm{flow}}$ with the same total flow and vice-versa. Our result then follows. 

  Let $\tilde{\cF}$ be a max-flow system from $S$ to $T'$ on $\tilde{G}^*_{\mathrm{flow}}$ from $S$ to $T'$ with integral flow assignments. Since $\tilde{G}^*_{\mathrm{flow}}$ and $G^*_{\mathrm{flow}}$ have all capacities equal to 1 it follows that $\tilde{\cF}$ assigns either 0 or 1 flow to all edges and vertices in the graph.

  We now construct a flow system $\cF$ on $G^*_{\mathrm{flow}}$ with the same capacity. First let $\cF$ assign the same capacity to all edges and vertices that $\cF$ shares with $\tilde{\cF}$. Note that if $\tilde{\cF}$ does not assign any flow to any of the edges incoming to the vertices $v_{(i,j)}$ then $\cF$ already corresponds to a flow system on $G^*_{\mathrm{flow}}$ with the same total flow. Suppose otherwise that $\tilde{\cF}$ assigns 1 unit of flow to the edges $\{a_1 \to v_{a_1b_1'}, \dots, a_k\to v_{a_kb_k'}\}$. Since $v_{(i,j)}$ and the $a_i$ have capacity 1 it follows that $a_i\not=a_j$ and $v_{a_ib_i'}\not=v_{a_ib_i'}$ for all $i\not=j$. For each edge $a_i \to v_{a_ib_i'}$, since $v_{a_ib_i'}$ has two outgoing edges $v_{a_ib_i'}\to a_i'$ and $v_{a_ib_i'}\to b_i'$, there are two possible cases:
  \begin{itemize}
  \item Case 1: $\tilde{\cF}$ assigns 1 flow to $v_{a_ib_i'}\to a_i'$. 
    
    In this case assign a flow of 1 to the edge $a_i\to a_i'$ in $\cF$.
  \item Case 2: $\tilde{\cF}$ assigns 1 flow to $v_{a_ib_i'}\to b_i'$.
    
    In this case assign a flow of 1 to the edge $a_i\to b_i'$ in $\cF$.
  \end{itemize}
  It is easy to check that $\cF$ is indeed a valid flow system on $G^*_{\mathrm{flow}}$ with the same flow as $\tilde{\cF}$.

  To see the oppose direction let $\cF$ be a max-flow system from $S$ to $T'$ on $G^*_{\mathrm{flow}}$ from $S$ to $T'$ with integral flow assignments. We now construct a flow system $\tilde{\cF}$ on $\tilde{G}^*_{\mathrm{flow}}$ with the same capacity. As before, first let $\tilde{\cF}$ assign the same capacity to all edges and vertices that $\tilde{\cF}$ shares with $\cF$. Note that if $\cF$ does not assign any flow to any of the edges $a\to b'$ for $(a,b)\in B$ then $\tilde{\cF}$ already corresponds to a flow system on $G^*_{\mathrm{flow}}$ with the same total flow. Suppose otherwise that $\tilde{\cF}$ assigns 1 unit of flow to the edges $E=\{a_1 \to b_1', \dots, a_k\to b_k'\}$ with $(a_i,b_i)\in B$ for all $i$. Since all vertices in $\cF$ have capacity 1 we must have that $a_i\not=a_j$ and $b_i\not=b_j$ for all $i\not=j$. There are two possible cases:
  \begin{itemize}
  \item Case 1: $a_i\to b_i'\in E$ and $b_i\to a_i \not\in E$. 
    
    In this case assign a flow of 1 along the path $a_i\to v_{a_ib_i} \to b_i'$ in $\tilde{\cF}$.
  \item Case 2: $a_i\to b_i'\in E$ and $b_i\to a_i\in E$. 
    
    In this case assign a flow of 1 to the edges $a_i\to a_i'$ and $b_i\to b_i'$ in $\tilde{\cF}$.
  \end{itemize}
  One may now check that $\tilde{\cF}$ is a valid flow system on $\tilde{G}^*_{\mathrm{flow}}$ with the same flow as $\cF$.
\end{proof}

Finally we are in a position to easily prove Lemma \ref{lem:0-det-generalization}. Note that, by Lemma \ref{lem:bidirected-sub} we have that $|\Gamma_{S,T}| = 0$ if and only if $|\tilde{\Gamma}_{S,T}| = 0$. By Lemma \ref{lem:empty-bidirected-part} we have that $|\tilde{\Gamma}_{S,T}| = 0$ if and only if the max-flow from $S$ to $T'$ in $\tilde{G}^*_{\mathrm{flow}}$ equals $|S|=k$. Finally Lemma \ref{lem:bisect-same-flows} gives us that the max-flow from $S$ to $T'$ in $\tilde{G}^*_{\mathrm{flow}}$ equals the max-flow from $S$ to $T'$ in $G^*_{\mathrm{flow}}$. Hence we have that $|\Gamma_{S,T}| = 0$ if and only if the max-flow from $S$ to $T'$ in $G^*_{\mathrm{flow}}$ equals $k$, this was our desired statement.

\section{Proof of Lemma \ref{lem:A-invertible}} \label{app:proof-A-invertible}

The proof of this lemma follows almost identically as the proof of Lemma 2 in \cite{halftrek}. We simply restate the arguments there in our setting. For any $v,w\in V$ let $\cH(v,w)$ be the set of half treks from $v$ to $w$ in $G$. Also let $\cT_{ij}$ be the set of all treks from $s_i$ to $t_j$ in $G$ which do not begin with an edge of the form $s_i \leftarrow h^i_k$ for any $1\leq k\leq \ell_i$. Then it is easy to see that $\cH(s_i,t_j) \subset \cT_{ij}$. Now, by the Trek Rule (Proposition \ref{prop:trek-rule}), we have that
\begin{align*}
  A_{ij} = \sum_{\pi\in \cT_{ij}} \pi(\Lambda,\Omega).
\end{align*}
Now for any system of treks $\Pi$ define the monomial
\begin{align*}
  \Pi(\Lambda,\Omega) = \prod_{\pi\in\Pi} \pi(\Lambda,\Omega).
\end{align*}
Then, by Leibniz's formula for the determinant, we have that
\begin{align}
  |A| = \sum_{\Pi}(-1)^{\sign(\Pi)}\Pi(\Lambda,\Omega) \label{eq:A-det-sum}
\end{align}
where the above sum is over all trek systems $\Pi$ from $S$ to $T$ using treks only in the set $\cup_{1\leq i,j\leq k}\cT_{ij}$; here the $\sign(\Pi)$ is the sign of the permutation that writes $t_1,\dots,t_k$ in the order of their appearance as targets of the treks in $\Pi$.

By assumption, there exists a half-trek system from $S$ to $T$ with no-sided intersection. Since such a system exists, let $\Pi$ be a half-trek system of minimum total length among all such half-trek systems. Since $\cH(s_i,t_j)\subset \cT_{ij}$ for all $i,j$ it follows that $\Pi$ is included as one of the trek systems in the summation \eqref{eq:A-det-sum}. Let $\Psi$ be any system of treks from $S$ to $T$ such that $\Psi(\Lambda,\Omega) = \Pi(\Lambda,\Omega)$. Lemma 1 of \cite{halftrek} proves that we must have $\Psi = \Pi$ so that $\Pi$ is the unique system of treks from $S$ to $T$ with corresponding trek monomial $\Pi(\Lambda,\Omega)$. It thus follows that the coefficient of the monomial $\Pi(\Lambda,\Omega)$ in $|A|$ is $(-1)^{\sign(\Pi)}$ and thus $|A|$ is not the zero polynomial (or power series if the sum is infinite). Hence, for generic choices of $(\Lambda,\Omega)$, we have that $|A|\not=0$ so that $A$ is generically invertible.

\bibliographystyle{abbrvnat}
\bibliography{half_trek}
\end{document}